\newcommand{\red}{}
\newcommand{\B}[1]{\mbox{\boldmath$#1$}} %
\newcommand{\R}[1]{\mbox{$\mathrm{#1}$}} %
\theoremstyle{plain}
\newtheorem{theorem}{Theorem}[section]
\theoremstyle{definition}
\theoremstyle{remark}
\newtheorem{remark}[theorem]{Remark}
\bmdefine{\Bz}{z}
\bmdefine{\Bx}{x}
\bmdefine{\Bt}{t}
\bmdefine{\Bh}{h}
\bmdefine{\Btheta}{\theta}
\bmdefine{\Beta}{\eta}
\begin{document}

\title{Estimation of exponential-polynomial distribution by holonomic gradient descent}

\author{
Jumpei Hayakawa\thanks{Graduate School of Information Science and Technology, University of Tokyo}\ \ 
and Akimichi Takemura\footnotemark[1]%
}
\date{September, 2014}
\maketitle

\begin{abstract}
We study holonomic gradient decent for maximum likelihood estimation of
exponential-polynomial distribution, whose density is 
the exponential function of a polynomial in the random variable.  We
first consider the case that the support of the distribution is the set of positive reals. 
We show that the maximum likelihood estimate (MLE) can be easily computed by the holonomic gradient descent, even though the normalizing constant of this family does not have a closed-form expression and discuss 
determination of the degree of the polynomial based on the score test statistic.
Then we present extensions to the whole real line and to the bivariate distribution on the positive orthant.
\end{abstract}

\noindent
{\it Keywords and phrases:} \ 
algebraic statistics, bivariate distribution, score test.

\section{Introduction}
Exponential distribution and the truncated normal distribution have been frequently 
used for positive continuous random variables (e.g., Chapter 19 and Section 13.10 of \cite{N.L.Johnson}, \cite{F.Xu}). Generalizing these two cases, in this paper we consider fitting a density function which 
is the exponential function of a polynomial in the random variable.  For simplicity we 
first study the case of a positive random variable. For $x>0$, consider the following density
\begin{equation}
\label{eq:exp-pol}
f(x; \theta_1,\dots,\theta_d)= \frac{1}{A(\theta_1,\dots, \theta_d)} \exp(\theta_1 x + \dots + \theta_d x^d),
\qquad \theta_d < 0,
\end{equation}
where 
\begin{equation}
\label{eq:norm-constant}
A(\theta_1,\dots,\theta_d) = \int_0^\infty \exp(\theta_1 x + \dots + \theta_d x^d) \R{d}x
\end{equation}
is the normalizing constant of this density. In the following we write $A_d(\Btheta)=A(\theta_1,\dots,\theta_d)$.
We call \eqref{eq:exp-pol} the {\em exponential-polynomial distribution of order $d$.}
Although it is a natural generalization  of the exponential ($d=1$) and the truncated normal distribution ($d=2$), it has been
rarely used in statistics.  One reason is that $A_d(\Btheta)$ can not be written in a closed form.
Another reason may be that the tail of the distribution is light because of the term $\theta_d x^d$, 
$\theta_d < 0$. 
However by having this term, we can allow arbitrary values of $\theta_1,\dots,\theta_{d-1}$ and
have a flexible family of distributions.

Concerning the treatment of the normalizing constant, recently in \cite{H.Nakayama} we proposed
a new method, called the holonomic gradient decent (HGD), for evaluating the normalizing constant of
the exponential family and for computing MLE.  As in the subsequent works (\cite{H.Hashiguchi}, \cite{T.Sei}), 
we show that HGD works well also for the case of exponential-polynomial distribution.

When we fit \eqref{eq:exp-pol} to a given sample, the natural question we face is the
determination of the order $d$ of the model.  The exponential-polynomial model has a 
special structure that the model of order $d-1$ with $\theta_d=0$ and $\theta_{d-1}<0$ is the
boundary of the model of order $d$ with $\theta_d <0$. 
{\red In regular hypothesis testing problems or model selection problems, 
a submodel is assumed to be a smooth manifold of a smaller dimension in the {\em interior} of a larger model.}
Hence we need to adapt model selection procedures to this non-regular case.  We propose selection of $d$ by a score test.

The organization of this paper is as follows.   In Sections \ref{sec:mle}--\ref{sec:numerical}
we study exponential-polynomial distribution over the set of positive reals.
In Section  \ref{sec:mle} we derive a differential equation satisfied by $A_d(\Btheta)$ and use
the differential equation to compute MLE.
In Section \ref{sec:test} we discuss how to determine the order $d$ of the model by a score test.
In Section \ref{sec:numerical} we present results of some numerical experiments.
In Section \ref{sec:two-sided} we extend the  exponential-polynomial distribution to the
whole real line and in Section \ref{sec:bivariate-positive} we study a bivariate exponential-polynomial distribution. 
We end the paper with some discussions on further extension of the model in Section \ref{sec:discussion}.

\section{Maximum likelihood estimation via holonomic gradient descent}
\label{sec:mle}

Given a sample $\Bx=(x_1,\dots,x_n)$ of size $n$, $(1/n)$ times the 
log-likelihood function is written as
\begin{equation}
\label{eq:log-likelihood}
\bar l(\Btheta;\Bx)=\theta_1 \bar x + \theta_2 \bar{x^2} + \dots + \theta_d \bar{x^d} -  \psi(\Btheta), \qquad \psi(\Btheta)=\log A_d(\Btheta),
\end{equation}
where $\bar{x^m}=\sum_{i=1}^n x_i^m/n$, $m=1,\dots,d$.  
Let $\partial_m = \frac{\partial}{\partial \theta_m}$ denote the differentiation with respect to $\theta_m$.
In maximizing $\bar l$ with respect to $\Btheta$,
we want to compute its gradient 
\[
\nabla \bar l = 
\begin{bmatrix}\partial_1 \bar l \\ \vdots \\ \partial_d \bar l 
\end{bmatrix}
=
\begin{bmatrix}\bar x  \\ \vdots \\ \bar{x^d}
\end{bmatrix} 
- 
\begin{bmatrix}\partial_1  \psi\\ \vdots \\ \partial_d \psi 
\end{bmatrix}, \quad \partial_m \psi(\Btheta) = \frac{\partial_m A_d(\Btheta)}{A_d(\Btheta)}
\]
and its Hessian matrix 
\[
H(\bar l)(\Btheta)= - H(\psi)(\Btheta)=
- 
\begin{bmatrix}
\partial_1^2  \psi&  \cdots & \partial_1\partial_d \psi\\
\vdots &  \cdots & \vdots\\
\partial_d \partial_1 \psi& \cdots & \partial_d^2 \psi
\end{bmatrix}, \quad \partial_l \partial_m \psi(\Btheta)=\frac{\partial_l \partial_m A_d(\Btheta)}{A_d(\Btheta)}
- \frac{\partial_l A_d(\Btheta)}{A_d(\Btheta)} \frac{\partial_m A_d(\Btheta)}{A_d(\Btheta)}.
\]
Note that $I(\Btheta)=H(\psi)(\Btheta)$ is the Fisher information matrix for $\Btheta$.

In \eqref{eq:norm-constant} we can interchange the integration and the
differentiation by elements of $\Btheta$ as many time as needed.  Hence
derivatives of $A_d(\Btheta)$ can be evaluated by numerical integration.
However it is cumbersome to perform numerical integration for the 
derivatives at every $\Btheta$.  The holonomic gradient
decent allows us to compute $A_d(\Btheta)$ and its derivatives at any point by
numerically solving a differential equation from those at an initial point
$\Btheta=\Btheta^0$. 
The fact that $A_d(\Btheta)$ is a holonomic function 
(cf.\ Section 1 and Appendix of \cite{H.Nakayama}, Chapter 6 of \cite{hibi_book_13}, \cite{D.Zeilberger}) 
guarantees the existence of a differential equation with polynomial coefficients satisfied by $A_d(\Btheta)$.
Also, for  our problem there is a convenient initial point (see \eqref{eq:initial} below),
where $A_d(\Btheta)$ and its derivatives have a closed-form expression.  Hence by using the 
holonomic gradient descent, we do not need any numerical integration for our problem.

Differentiating  \eqref{eq:norm-constant} by $\theta_1$ we have
\[
\partial_1 A_d(\Btheta)=\int_0^\infty x \exp(\theta_1 x + \dots + \theta_d x^d) \R{d}x.
\]
Repeating this $i$ times we have
\begin{equation}
\label{eq:partial1i}
\partial_1^i A_d(\Btheta)=\int_0^\infty x^i \exp(\theta_1 x + \dots + \theta_d x^d) \R{d}x.
\end{equation}
However the right-hand side is also equal to $\partial_i A(\Btheta)$.  Hence 
the following relation holds.
\begin{equation}
\label{eq:diff1}
\partial_i A_d(\Btheta) = \partial_1^i A_d(\Btheta).
\end{equation}
In general, for any higher-order mixed derivative $\partial_1^{j_1} \dots \partial_d^{j_d} A(\Btheta)$ we have the relation
\[
\partial_1^{j_1} \dots \partial_d^{j_d} A_d(\Btheta)=\partial_1^{j_1 + 2 j_2 + \dots + d j_d}A_d(\Btheta).
\]
Hence all mixed derivatives reduce to the derivatives of $A_d(\Btheta)$ with respect to $\theta_1$.
It follows that for numerical purposes we only need to keep in memory the 
derivatives of $A_d(\Btheta)$ with respect to $\theta_1$.

Now as a relation among the derivatives of $A_d(\Btheta)$ with respect to $\theta_1$,  we have the following theorem.

\begin{theorem} \label{thm:1}
 $A_d(\Btheta)$ satisfies the following differential equation
\label{thm:diff-eq-1}
\begin{equation}
\label{eq:theorem1}
(\theta_1 + 2 \theta_2 \partial_1 + 3 \theta_3 \partial_1^2 + \dots + d \theta_d \partial_1^{d-1}) A_d(\Btheta)=-1.
\end{equation}
\end{theorem}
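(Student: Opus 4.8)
The plan is to derive \eqref{eq:theorem1} directly from the integral representation \eqref{eq:norm-constant} via the fundamental theorem of calculus, exploiting the fact that the polynomial appearing in the exponent and its derivative are linked. Write $g_\Btheta(x)=\theta_1 x+\theta_2 x^2+\dots+\theta_d x^d$ for the exponent, so that $A_d(\Btheta)=\int_0^\infty e^{g_\Btheta(x)}\,\R{d}x$ and $g_\Btheta'(x)=\theta_1+2\theta_2 x+3\theta_3 x^2+\dots+d\theta_d x^{d-1}$.

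First I would note the elementary identity $\frac{\R{d}}{\R{d}x}e^{g_\Btheta(x)}=g_\Btheta'(x)\,e^{g_\Btheta(x)}$ and integrate both sides over $(0,\infty)$. The left-hand side telescopes to $\lim_{x\to\infty}e^{g_\Btheta(x)}-e^{g_\Btheta(0)}$. Since $\theta_d<0$, the leading term $\theta_d x^d$ dominates so $g_\Btheta(x)\to-\infty$ and the limit is $0$, while $g_\Btheta(0)=0$; hence the left-hand side equals $-1$. This is precisely where the standing assumption $\theta_d<0$ enters, and it is also what makes all the integrals in sight finite.

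Next I would expand the right-hand side as $\int_0^\infty g_\Btheta'(x)\,e^{g_\Btheta(x)}\,\R{d}x=\sum_{i=1}^d i\,\theta_i\int_0^\infty x^{i-1}e^{g_\Btheta(x)}\,\R{d}x$ and identify each integral by means of \eqref{eq:partial1i}, namely $\int_0^\infty x^{i-1}e^{g_\Btheta(x)}\,\R{d}x=\partial_1^{i-1}A_d(\Btheta)$. Summing, the right-hand side becomes $(\theta_1+2\theta_2\partial_1+\dots+d\theta_d\partial_1^{d-1})A_d(\Btheta)$, and equating it with $-1$ yields \eqref{eq:theorem1}.

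The calculation is essentially routine; the only point requiring care — the main obstacle, such as it is — is the rigorous justification of the boundary evaluation and of differentiating under the integral sign, i.e.\ that $x^{i-1}e^{g_\Btheta(x)}$ is integrable on $(0,\infty)$ for every $i$ and that the differentiations leading to \eqref{eq:partial1i} are legitimate. Both follow from $\theta_d<0$ together with dominated convergence (a crude bound of the form $e^{g_\Btheta(x)}\le C_\Btheta\,e^{\theta_d x^d/2}$ for large $x$ suffices), and the interchange of integration and differentiation has already been granted in the discussion preceding \eqref{eq:partial1i}. Thus no further technical machinery is needed.
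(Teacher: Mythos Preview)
Your proof is correct and follows essentially the same approach as the paper: integrate $\partial_x e^{g_\Btheta(x)}$ over $(0,\infty)$, evaluate the boundary term as $-1$, expand $g_\Btheta'(x)$ and identify each moment integral with $\partial_1^{i-1}A_d(\Btheta)$ via \eqref{eq:partial1i}. The only difference is that you spell out the justification for the vanishing at infinity and the integrability, which the paper leaves implicit.
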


\begin{proof} 
\begin{align*}
-1 &= \big[ \exp(\theta_1 x + \dots + \theta_d x^d)\big]_0^\infty \\
  &= \int_0^\infty \partial_x \exp(\theta_1 x + \dots + \theta_d x^d) \; \R{d}x\\
   &= \int_0^\infty (\theta_1 + 2 \theta_2 x + 3 \theta_3 x^2 + \dots + d \theta_d x^{d-1}) \exp(\theta_1 x + \dots + \theta_d x^d)  \R{d}x \\
&= (\theta_1 + 2 \theta_2 \partial_1 + 3 \theta_3 \partial_1^2 + \dots + d \theta_d \partial_1^{d-1}) A_d(\Btheta).  \hspace{2cm} \text{(by \eqref{eq:partial1i})}
\end{align*}
\end{proof}

By \eqref{eq:theorem1}, $\partial_1^{d-1}A_d(\Btheta)$ is written in terms of lower-order derivatives as
\begin{equation}
\label{eq:solve-for-k-1}
\partial_1^{d-1}A_d(\Btheta)= 
{\red 
-\frac{1}{d\theta_d} \left\{1 + \Big(\theta_1 + 2 \theta_2 \partial_1 + 3 \theta_3 \partial_1^2 + \dots + (d-1) \theta_{d-1} \partial_1^{d-2}\Big) A_d(\Btheta)\right\}.
}
\end{equation}
Recursively differentiating this by $\theta_1$ we see that all higher-order derivatives
$\partial_1^m A_d(\Btheta)$, $m \ge d-1$, can be written in terms of the elements of a vector
\[
F(\Btheta)=[A_d(\Btheta),\partial_1 A_d(\Btheta),\dots, \partial_1^{d-2}A_d(\Btheta)]^{\mathsf{T}},
\]
where ${}^{\mathsf{T}}$ denotes the transpose of a vector or a matrix.
If $F(\Btheta)$ can be evaluated at any point $\Btheta$, then by \eqref{eq:diff1} we can evaluate
the gradient of $A_d$ and hence can compute MLE of the exponential-polynomial distribution.

The directional derivative of $F(\Btheta)$ in the direction  $\Bh=(h_1,\dots, h_d)$ is written as
\begin{equation}
\label{eq:pfaffian}
\frac{\partial}{\partial s} F(\Btheta + s \Bh)  = \sum_{j=1}^d h_j \partial_j F(\Btheta + s \Bh)  
= \sum_{j=1}^d h_j \partial_1^j F(\Btheta + s \Bh)  =
\sum_{j=1}^d h_j \begin{bmatrix} \partial_1^j A_d(\Btheta  + s \Bh)\\ \partial_1^{j+1} A_d(\Btheta  + s \Bh) \\ \vdots \\ \partial_1^{j+d-2} A_d(\Btheta  + s \Bh)
\end{bmatrix} .
\end{equation}
{\red 
If we differentiate \eqref{eq:solve-for-k-1} recursively, we obtain $(d-1)\times (d-1)$ matrices (called the Pfaffian matrices) 
$R_j$, $j=1,\dots,d$, with rational function elements such that the vector on the right-hand
side of \eqref{eq:pfaffian} is written as
\[
\begin{bmatrix} \partial_1^j A_d(\Btheta  + s \Bh)\\ \partial_1^{j+1} A_d(\Btheta  + s \Bh) \\ \vdots \\ \partial_1^{j+d-2} A_d(\Btheta  + s \Bh)
\end{bmatrix}
= R_j(\Btheta + s \Bh) F(\Btheta + s\Bh).
\]
Then the equation
\[
\frac{\partial}{\partial s} F(\Btheta + s \Bh) = 
\sum_{j=1}^d h_j R_j(\Btheta + s \Bh) F(\Btheta + s\Bh)
\]
can be solved by standard ODE solvers, such as the Runge-Kutta method, when 
an appropriate initial point $\Btheta_0$ and $F(\Btheta_0)$ are given. 
This is the procedure of HGD introduced in \cite{H.Nakayama}.
}

As a convenient initial point consider $\Btheta^0 = (0,0,\dots,0,-c)$, $c>0$. Then
\begin{equation}
\label{eq:initial}
\partial_1^m A_d(\Btheta^0)= \int_0^\infty x^m \exp(-cx^d) \R{d}x = \frac{1}{d} c^{-(1+m)/d} \Gamma\big(\frac{1+m}{d}\big), \qquad m\ge 0,
\end{equation}
which do not need numerical integration.

{\red
\begin{remark}
\label{rem:1}
Nobuki Takayama pointed out that $A_d$ satisfies an incomplete $A$-hypergeometric system 
introduced in \cite{nishiyama-takayama}.
In particular Theorem 2.2 of \cite{nishiyama-takayama} gives a basic result on incomplete
$A$-hypergeometric systems for a class of integrals including our $A_d$. 
See also Section 6.12 of \cite{hibi_book_13}. 
\end{remark}
}

In summary, we have shown that the evaluation of $A_d(\Btheta)$ and the maximization of the
likelihood function can be performed by using only a standard solver for an ordinary differential
equation.  As we see in Section \ref{sec:numerical} this method works quite well in practice.

\section{Determination of the degree of the model}
\label{sec:test}

When we fit the exponential-polynomial distribution in \eqref{eq:exp-pol} to a given sample, we need to 
determine the order $d$ of the model.  
Suppose that we are fitting the model with order $d-1$ and wondering  whether 
a model of order $d$ fits better.  One difficulty with 
\eqref{eq:solve-for-k-1} is that it becomes unstable as $\theta_d \rightarrow 0$, i.e., the
differential equation \eqref{eq:theorem1} has a singularity at $\theta_d=0$.  Hence if
the data really come from the model of order $d-1$, the estimation of the model of order $d$
by our method tends to be unstable.  

\begin{figure}[htbp]
\begin{center}
\includegraphics[width=4.5cm]{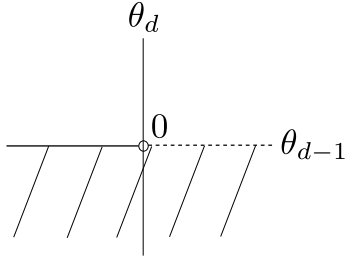}
\caption{Model of order $d-1$ within the model of order $d$}
\label{fig:model_d}
\end{center}
\end{figure}

We can understand this problem by considering the parameter spaces of order $d-1$ and $d$.
Let $\Omega_d=\{ (\theta_1, \dots, \theta_d) \mid \theta_d < 0\}\subset {\mathbb R}^d$ denote the
parameter space of the model of order $d$.  $\Omega_d$ is an open subset of ${\mathbb R}^d$.
Now $\Omega_{d-1}=\{ (\theta_1, \dots, \theta_{d-1},0) \mid \theta_{d-1} < 0\}$ 
considered as a subset of ${\mathbb R}^{d}$ is on the boundary of $\Omega_d$.
See Figure \ref{fig:model_d}. In $(\theta_{d-1},\theta_d)$-plane, $\Omega_d$ is the 
lower half open plane and $\Omega_{d-1}$ the left half open $\theta_{d-1}$-axis
$\{(\theta_{d-1},0)\mid \theta_{d-1} < 0\}$.
Since $A_d(\theta_1,\dots,\theta_{d-1},0)$ is finite for $\theta_{d-1}<0$, 
MLE may not exist in the open set $\Omega_d$ with positive probability.
For each $d$, consider  $\Omega_1,\dots, \Omega_d$  as subsets of ${\mathbb R}^d$ and let
$\bar\Omega_d=\Omega_1 \cup \cdots \cup \Omega_d$.  
{\red 
Since $A_d(\Btheta) < \infty$ if and only
if the last non-zero element of $\Btheta$ is negative, we have $\bar\Omega_d = \{ \Btheta \mid A(\Btheta) < \infty \}$.
}
$\psi_d(\Btheta) = {\red \log}  A_d(\Btheta)$  is strictly
convex on $\bar\Omega_d$ and approaches $+\infty$ as $\Btheta$ approaches the open boundary of $\bar\Omega_d$, such as the right half open $\theta_{d-1}$-axis $\{(\theta_{d-1},0)\mid \theta_{d-1} > 0\}$ in Figure 
\ref{fig:model_d}.  
Hence MLE always exists in $\bar\Omega_d$ but may not fall on $\Omega_d$.

We now consider the hypothesis testing problem:
\begin{equation}
\label{eq:testk-1}
H_0: \Btheta \in \Omega_{d-1} \ \ \text{v.s.}\ \ H_1: \Btheta\in \Omega_d .
\end{equation}
If $H_0$ is true %
let $\Btheta^*\in \Omega_{d-1}$ denote the true parameter vector and 
let $\hat\Btheta_{d-1} = (\hat\theta_1, \dots, \hat\theta_{d-1},0)$, $\hat\theta_{d-1}\le 0$, denote the
MLE under $H_0$.  {\red $\hat\Btheta_{d-1}$ may belong to $\Omega_k, k<d-1$. However as $n\rightarrow\infty$,
$\hat\Btheta_{d-1}$ converges to $\Btheta^*$ in probability.
}

The MLE  $\hat\Btheta_{d-1}$ under $H_0$ satisfies
\[
\partial_j \bar l(\hat\Btheta_{d-1};\Bx)=0, \qquad j=1,\dots, d-1.
\]
Note that $\bar l(\hat \Btheta_{d-1} + s \Bh;\Bx)$
is strictly concave in $s\ge 0$ for any $\Bh=(h_1,\dots,h_d)$, $h_d<0$, i.e., on
any half line emanating from $\hat\Btheta_{d-1}$ into $\Omega_d$.
Hence on this half line, $\bar l(\hat \Btheta_{d-1} {\red +  s \Bh};\Bx)$ is maximized at $s=0$ if and only
if 
\[
 \ 0\ge \frac{\partial}{\partial s} \bar l(\hat \Btheta_{d-1} + s \Bh;\Bx) |_{s=0} = 
\sum_{j=1}^d  h_j \partial_j \bar l(\hat\Btheta_{d-1};\Bx)= h_d \partial_d \bar l(\hat\Btheta_{d-1};\Bx) \\
 \Leftrightarrow  \ \partial_d \bar l(\hat\Btheta_{d-1};\Bx) \ge 0 .
\]
Note that the right-hand side does not depend on $\Bh$. 
{\red 
Hence MLE does not exist on $\Omega_d$ if and only if $\partial_d \bar l(\hat\Btheta_{d-1};\Bx) \ge 0$.   In this case $\hat\Btheta_{d-1}$ is the MLE over $\bar\Omega_d$.}

Let the $d\times d$ Fisher information matrix $I(\Btheta)=H(\psi)(\Btheta)$ be partitioned as
\[
I(\Btheta) = \begin{bmatrix} I_{d-1,d-1}(\Btheta) & I_{d-1,d}(\Btheta)\\
 I_{d,d-1}(\Btheta) & I_{dd}(\Btheta)
\end{bmatrix},
\]
where $I_{dd}$ is a scalar. 
{\red 
$I(\Btheta)$ is non-singular, since the score functions 
$\partial_m \bar l(\Btheta;\Bx)= \bar{x^m} - \partial_m \psi(\Btheta)$, $m=1,\dots,d$,
are moments and linearly independent for any $\Btheta$. 
}
Note that we put a comma between two subscripts when the subscripts are more complicated.
Define
\[
I_{dd\cdot 1,\dots,d-1}(\Btheta)=I_{dd}(\Btheta) -  I_{d,d-1}(\Btheta)  I_{d-1,d-1}(\Btheta)^{-1} I_{d-1,d}(\Btheta).
\]
In the standard case, where $\Omega_{d-1}$ is in the interior of $\Omega_d$, the two-sided test
based on 
\[
\frac{n(\partial_d \bar l(\hat\Btheta_{d-1};\Bx))^2}{I_{dd\cdot 1,\dots,d-1}(\hat\Btheta_{d-1})}
\]
is the score test for \eqref{eq:testk-1} (e.g., Section 7.7 of \cite{lehmann-large-sample}).
In our case $\Omega_{d-1}$ is the boundary of $\Omega_d$ and we reject $H_0$ if
$\partial_d\bar l(\hat\Btheta_{d-1};\Bx)$ is negative and its absolute value is too large.  
However, from the form of the log-likelihood function in \eqref{eq:log-likelihood}, 
the asymptotic null distribution $\partial_d\bar l(\hat\Btheta_{d-1};\Bx)$ is the
same as in the standard case, i.e.,
\begin{equation*}
\sqrt{n}\partial_d\bar l(\hat\Btheta_{d-1};\Bx) \stackrel{d}{\rightarrow} \R{N}(0,I_{dd\cdot 1,\dots,d-1}(\Btheta^*)) \qquad (n\rightarrow\infty).
\end{equation*}
Since $\hat\Btheta_{d-1}$ converges to $\Btheta^*$, we propose the following score test statistic
\begin{equation}
\label{eq:standard-asymptotic-theory-proposed-test}
T_{d-1}=\frac{\sqrt{n}\partial_d\bar l(\hat\Btheta_{d-1};\Bx)}{\sqrt{I_{dd\cdot 1,\dots,d-1}(\hat\Btheta_{d-1})}} .
\end{equation}
Let $z_\alpha$ denote the upper $\alpha$ quantile of $\R{N}(0,1)$. 
Given a significance level $\alpha < 1/2$, we can reject $H_0$ if
$T_{d-1} \le - z_\alpha$, 
in view of the convergence in distribution 
\begin{equation}
\label{eq:standard-asymptotic-theory-2}
T_{d-1}
 \stackrel{d}{\rightarrow} \R{N}(0,1) \qquad (n\rightarrow\infty).
\end{equation}

\section{Numerical experiments for the case of positive real line}
\label{sec:numerical}
We present results of some numerical experiments to show that MLE by
HGD
works well.  We also check the asymptotic approximation
in %
\eqref{eq:standard-asymptotic-theory-2}.

\subsection{Performance of MLE by the holonomic gradient descent}
The asymptotic distribution of MLE $\hat\Btheta_{d}$ is 
\begin{equation*}
\sqrt{n}(\hat\Btheta_{d}-\Btheta^*)
 \stackrel{d}{\rightarrow} \R{N}_{d}(0,I(\Btheta^*)^{-1}) \qquad (n\rightarrow\infty),
\end{equation*}
where
$I(\Btheta^*)$ is the Fisher information matrix at the true parameter  $\Btheta^*$. 
{\red
We assume that $\Btheta^*$ is an element of $\Omega_d$ (hence not an element of $\bar\Omega_d\setminus \Omega_d$).
}
Write 
\begin{equation}
\label{eq:mle-component-asymptotic-theory-define}
p_i = \frac{\sqrt{n}(\hat\theta_{i}-\theta_{i}^*)}{\sqrt{I^{-1}_{ii}(\Btheta^*)}}
,\qquad i=1,2,\dots, d,
\end{equation}
where  $I^{-1}_{ii}(\Btheta^*)$ denotes the $(i,i)$-component of $I(\Btheta^*)^{-1}$.
Then 
\begin{equation}
\label{eq:mle-component-asymptotic-theory}
p_i %
 \stackrel{d}{\rightarrow} \R{N}(0,1) \qquad (n\rightarrow\infty). %
\end{equation}
Thus in our experiments we fix 
the true parameter $\Btheta^*$, apply  our method to simulated samples %
many times and we check 
the convergence of the empirical distribution of $p_i$ 
to $\R{N}(0,1)$. %

\begin{figure}[thbp]
\begin{center}
\includegraphics[width=5cm]{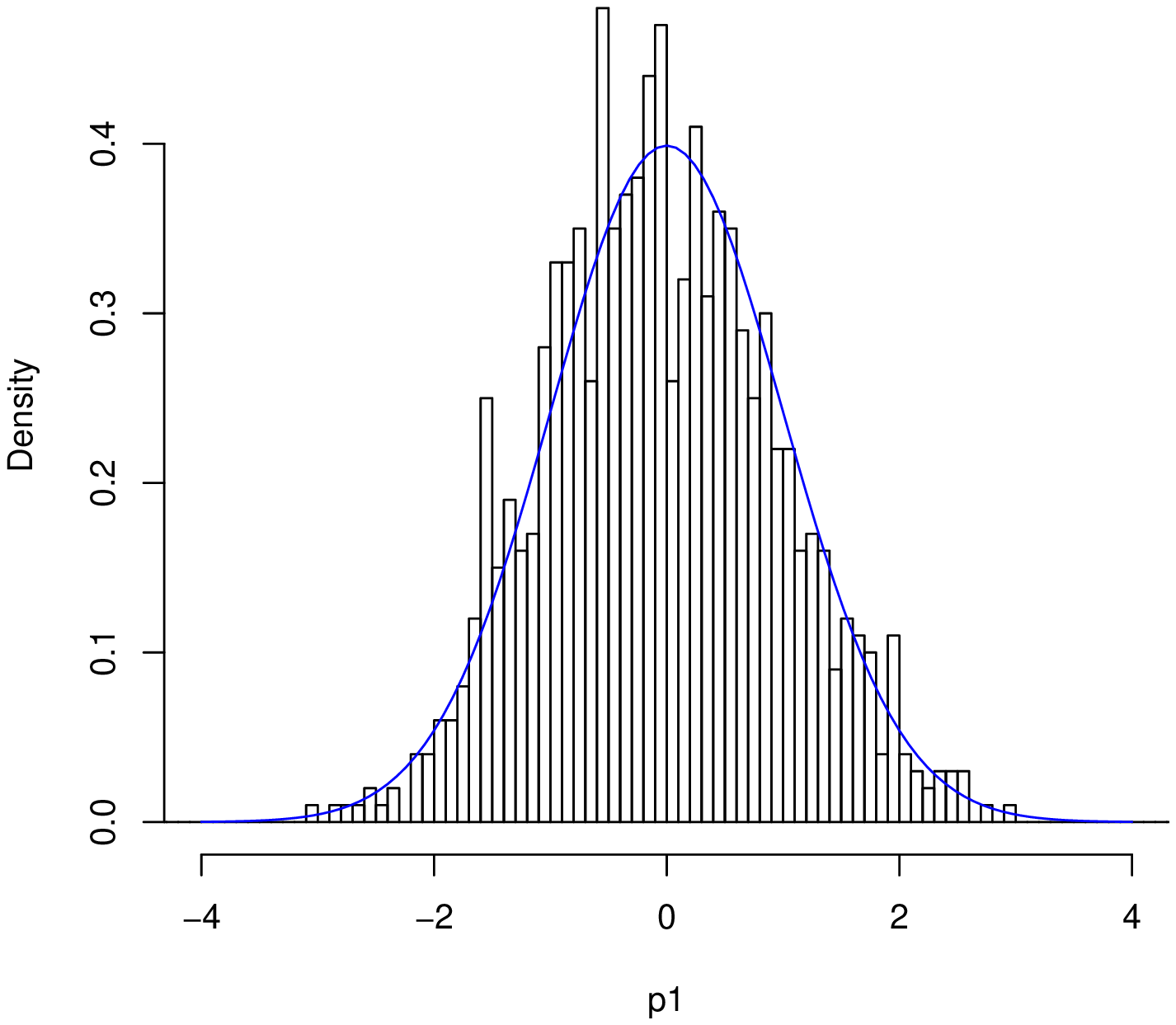}
\includegraphics[width=5cm]{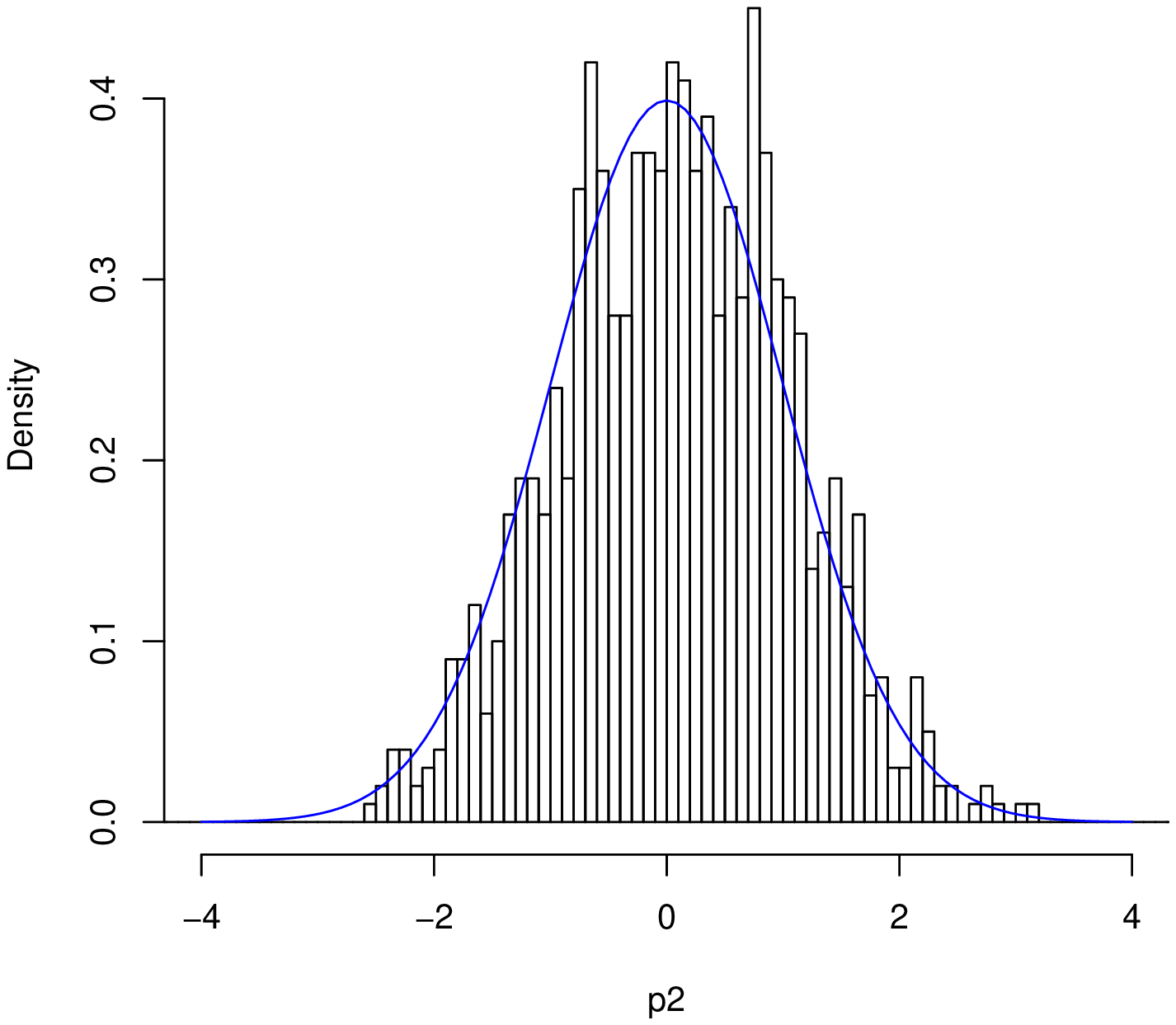}
\includegraphics[width=5cm]{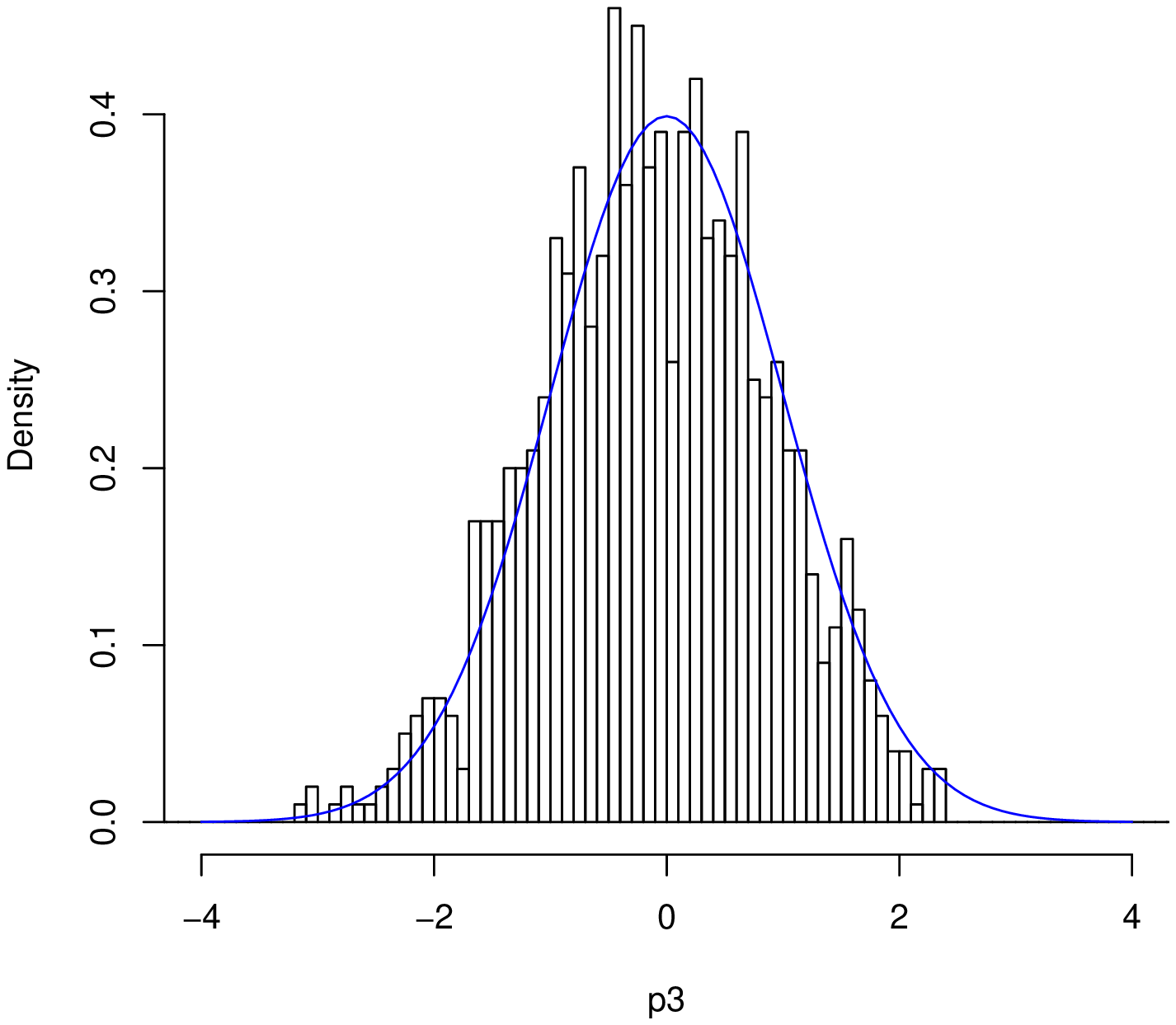}
\caption{Histogram of %
$p_i$, $i=1,2,3$ (from left to right) and density of \R{N}(0,1) for $d=3$ 
}
\label{fig:dim3_theta}
\end{center}
\end{figure}
We present simulation results %
for $d=3$ in \eqref{eq:exp-pol}. %
We set %
$\Btheta^*=(-1,3,-2)$. %
In the experiment we used $n=1000$ and  iterated computing MLE $1000$ times (i.e.\ the replication size is 1000).
Computation of MLE quickly converged in each iteration.
The histogram of %
$p_i$ %
is given  %
in Figure \ref{fig:dim3_theta}. 
The curved lines in these figures are the density function of $\R{N}(0,1)$. By comparing the histogram and the curved line we see that MLE by HGD works well.

\subsection{Asymptotic approximation for score tests}
\label{subsec:asymptotic_approximation}
We check the asymptotic approximation
in %
\eqref{eq:standard-asymptotic-theory-2} in the case of $d=3, 4$.
For $d=3$ we set $\Btheta^*=(3,-2,0)$. 
The histograms of $T_2$ and $T_3$ are shown in Figure \ref{fig:asymptotic} (left to right).
Again 
the asymptotic approximation works as expected.
\begin{figure}[htbp]
\begin{center}
\includegraphics[width=6cm]{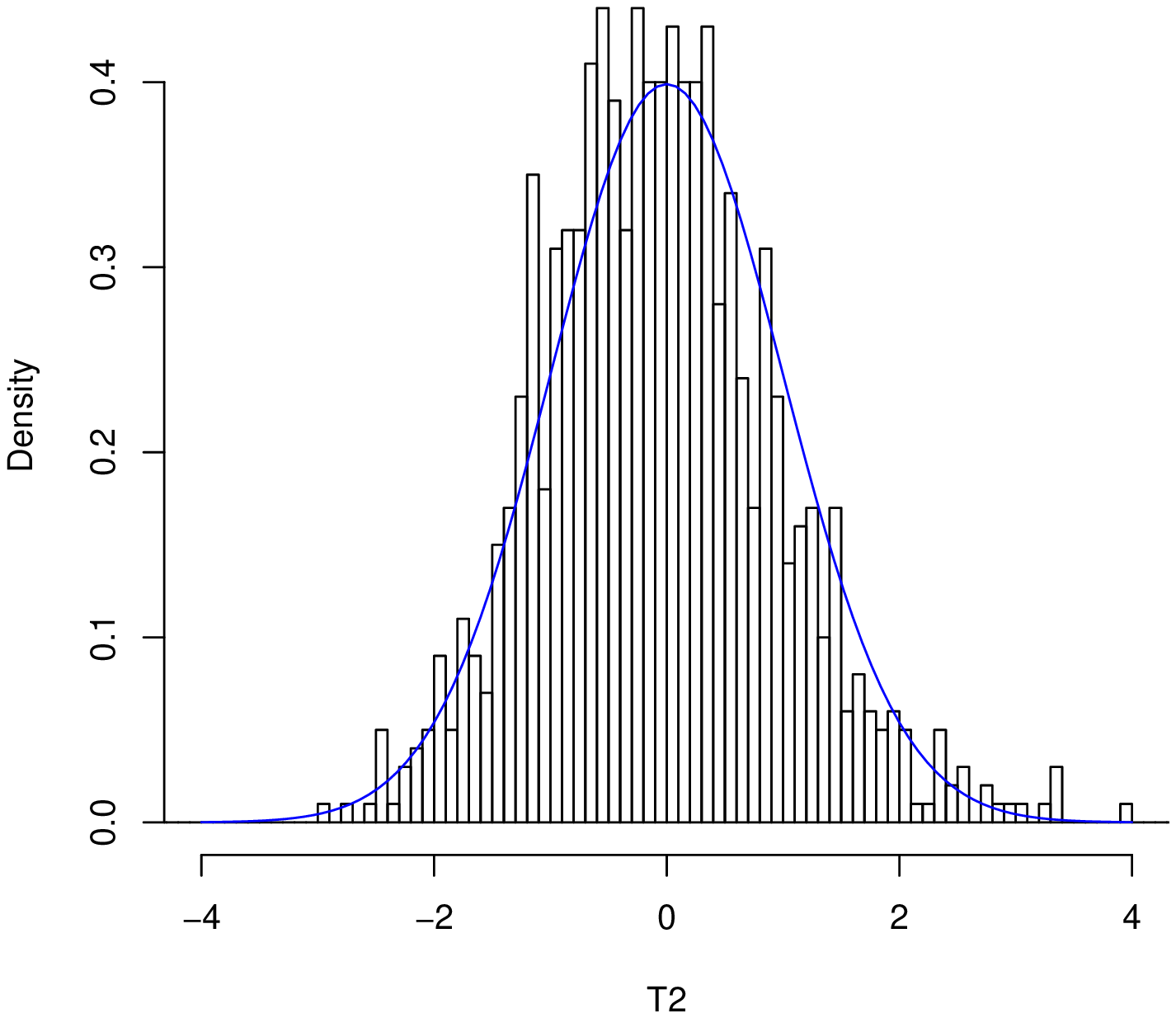}\qquad
\includegraphics[width=6cm]{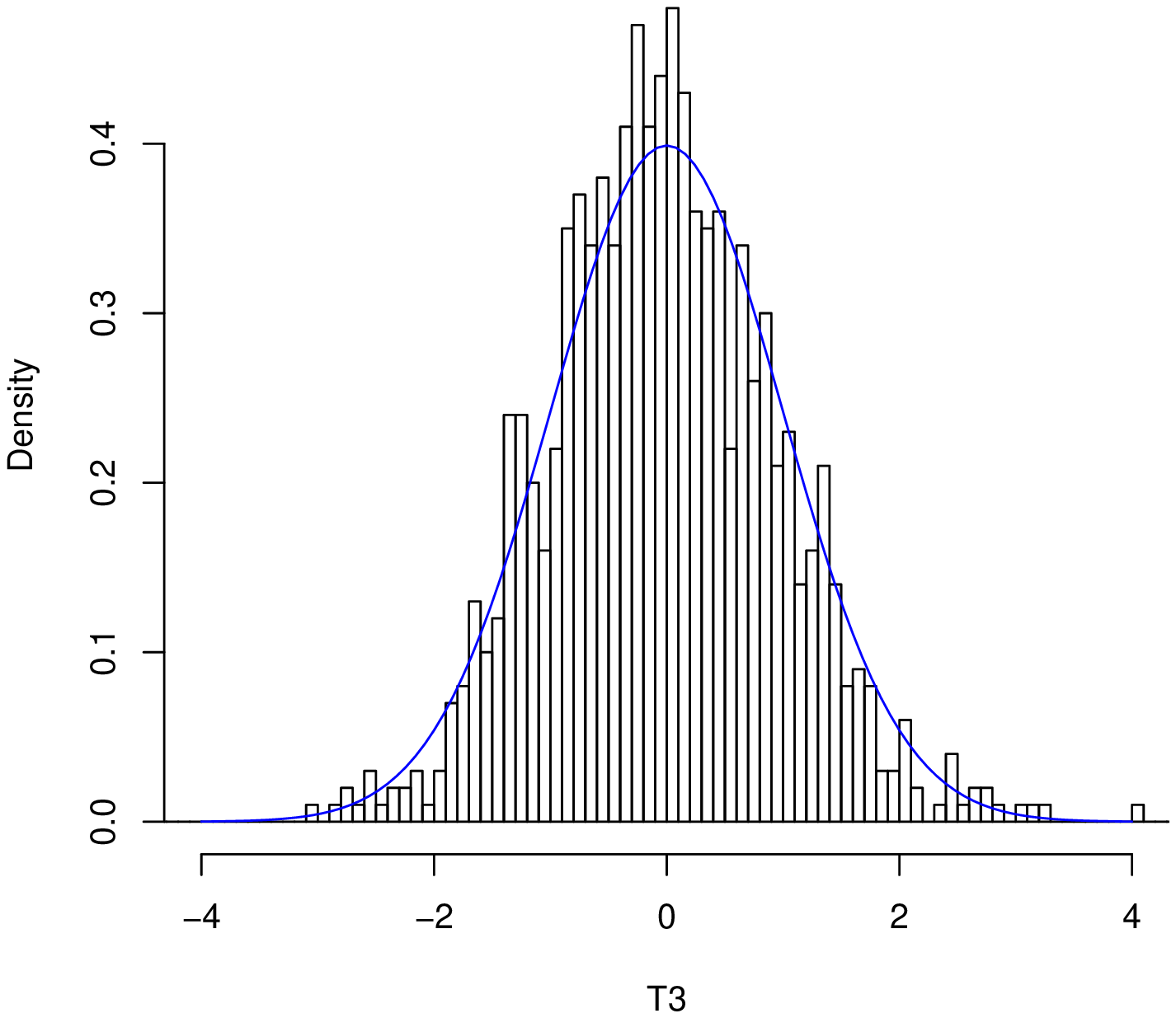}
\caption{Histogram of 
$T_{d-1}$ and the density of $\R{N}(0,1)$ for $d=3, 4$ 
}
\label{fig:asymptotic}
\end{center}
\end{figure}

\section{Exponential-polynomial distribution on  the whole real line}
\label{sec:two-sided}

In this section we extend the result of previous sections to the following density for the whole real line ${\mathbb R}^1$. Consider the density function 
\begin{equation}
\label{eq:exp-pol-whole}
f(x; \theta_1,\dots,\theta_{2d})= \frac{1}{A(\theta_1,\dots, \theta_{2d})} \exp(\theta_1 x + \dots + \theta_{2d} x^{2d}), 
\qquad \theta_{2d} < 0,
\end{equation}
where 
\begin{equation}
\label{eq:norm-constant-two-sided}
A(\theta_1,\dots,\theta_{2d}) = \int_{-\infty}^\infty \exp(\theta_1 x + \dots + \theta_{2d} x^{2d}) \R{d}x
\end{equation}
is the normalizing constant of this density. In following we write $A_{2d}(\Btheta)=A(\theta_1,\dots,\theta_{2d})$.

\subsection{Maximum likelihood estimation for the whole line}
The holonomic gradient decent is almost the same as in the previous sections.
We have
\[
\partial_1^i A_{2d}(\Btheta)=\int_{{\red -\infty}}^\infty x^i \exp(\theta_1 x + \dots + \theta_{2d} x^{2d}) \R{d}x, \quad i=1,2,\dots .
\]
Also %
$
\partial_i A_{2d}(\Btheta) = \partial_1^i A_{2d}(\Btheta).
$
In general %
$
\partial_1^{j_1} \dots \partial_d^{j_d} A_{2d}(\Btheta)=\partial_1^{j_1 + 2 j_2 + \dots + d j_d}A_{2d}(\Btheta).
$
Hence all mixed derivatives reduce to the derivatives of $A_{2d}(\Btheta)$ with respect to $\theta_1$.
It follows that for numerical purposes we only need to keep in memory the 
derivatives of $A_{2d}(\Btheta)$ with respect to $\theta_1$.

Now as a relation among the derivatives of $A_{2d}(\Btheta)$ with respect to $\theta_1$ we have the following theorem.

\begin{theorem}  $A_{2d}(\Btheta)$ satisfies the following differential equation
\begin{equation}
\label{eq:theorem2}
(\theta_1 + 2 \theta_2 \partial_1 + 3 \theta_3 \partial_1^2 + \dots + 2d \theta_{{\red 2d}} \partial_1^{2d-1}) A_{2d}(\Btheta)=0.
\end{equation}
\end{theorem}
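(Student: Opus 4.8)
The plan is to repeat the argument of Theorem \ref{thm:1} verbatim, the only change being that the boundary term now vanishes at \emph{both} ends rather than contributing $-1$. Concretely, I would start from the fundamental theorem of calculus applied to $x\mapsto \exp(\theta_1 x + \dots + \theta_{2d}x^{2d})$ on ${\mathbb R}^1$:
\[
\big[\exp(\theta_1 x + \dots + \theta_{2d}x^{2d})\big]_{-\infty}^{\infty} = \int_{-\infty}^\infty \partial_x \exp(\theta_1 x + \dots + \theta_{2d}x^{2d})\,\R{d}x .
\]
The left-hand side is $0$: since $\theta_{2d}<0$ and $2d$ is even, the exponent tends to $-\infty$ both as $x\to +\infty$ and as $x\to -\infty$, so the integrand of $A_{2d}$ and its boundary values vanish at both limits. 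This is precisely the condition that makes $A_{2d}(\Btheta)$ finite.

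Next I would apply the chain rule inside the integral,
\[
\partial_x \exp(\theta_1 x + \dots + \theta_{2d}x^{2d}) = (\theta_1 + 2\theta_2 x + 3\theta_3 x^2 + \dots + 2d\,\theta_{2d}x^{2d-1})\exp(\theta_1 x + \dots + \theta_{2d}x^{2d}),
\]
and integrate term by term. Each term $\int_{-\infty}^\infty k\,\theta_k\, x^{k-1}\exp(\theta_1 x + \dots + \theta_{2d}x^{2d})\,\R{d}x$ equals $k\,\theta_k\,\partial_1^{k-1}A_{2d}(\Btheta)$ by the identity $\partial_1^i A_{2d}(\Btheta)=\int_{-\infty}^\infty x^i\exp(\theta_1 x + \dots + \theta_{2d}x^{2d})\,\R{d}x$ recorded just before the theorem. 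Summing over $k=1,\dots,2d$ yields $(\theta_1 + 2\theta_2\partial_1 + \dots + 2d\,\theta_{2d}\partial_1^{2d-1})A_{2d}(\Btheta)$, which therefore equals $0$, as claimed.

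The only non-routine points are (i) justifying that the boundary term genuinely vanishes, which follows from the term $\theta_{2d}x^{2d}$ ($\theta_{2d}<0$) dominating all lower-order terms, so that $\exp(\text{polynomial})$ decays super-exponentially at $\pm\infty$; and (ii) justifying the interchange of $\partial_1^i$ with the integral, which is legitimate for the same reason — the differentiated integrands are still dominated by an integrable function uniformly in a neighborhood of $\Btheta$ — exactly as already remarked for $A_d(\Btheta)$ in Section \ref{sec:mle}. Neither is a genuine obstacle; if anything the whole-line case is cleaner than the half-line case, since the lower limit no longer produces the constant $-1$.
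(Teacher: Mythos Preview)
Your proposal is correct and is precisely the argument the paper has in mind: the authors omit the proof, merely remarking that it is the same as that of Theorem~\ref{thm:1} once one notes $0 = \big[\exp(\theta_1 x + \dots + \theta_{2d} x^{2d})\big]_{-\infty}^\infty$. Your write-up simply spells out those steps, with the boundary term now $0$ rather than $-1$.
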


Proof is omitted since it almost the same as the proof of Theorem \ref{thm:1}, by noting
\[0 = \big[ \exp(\theta_1 x + \dots + \theta_{2d} x^{2d})\big]_{-\infty}^\infty.\]
By \eqref{eq:theorem2}, $\partial_1^{2d-1}A_{2d}(\Btheta)$ is written in terms of lower-order derivatives as
\[
\partial_1^{2d-1}A_{2d}(\Btheta)= -\frac{1}{2d\theta_{2d}} (\theta_1 + 2 \theta_2 \partial_1 + 3 \theta_3 \partial_1^2 + \dots + (2d-1) \theta_{2d-1} \partial_1^{2d-2}) A_{2d}(\Btheta).
\]
Recursively differentiating this by $\theta_1$ all higher-order derivatives
$\partial_1^m A_{2d}(\Btheta)$, $m \ge 2d-1$, can be easily written in terms of
$A_{2d}(\Btheta),\partial_1 A_{2d}(\Btheta),\dots, \partial_1^{2d-2}A_{2d}(\Btheta)$.

As a convenient initial point consider $\Btheta^0 = (0,0,\dots,0,-c)$, $c>0$. Then
\[
\partial_1^m A_{2d}(\Btheta^0) =\int_{-\infty}^\infty x^m \exp(-cx^{2d}) \R{d}x = 
\begin{cases}\frac{1}{d} c^{-(1+m)/2d} \Gamma\big(\frac{1+m}{2d}\big)& \qquad m=0,2,4,\dots \\
0&\qquad m=1,3,5,\dots
\end{cases}
\]
which do not need numerical integration.

\subsection{Determination of the degree for the case of the whole line}
For determining the order of the model we consider the testing problem
\begin{equation*}
H_0: \Btheta \in \Omega_{2d-2} \ \ \text{v.s.}\ \ H_1: \Btheta\in \Omega_{2d} .
\end{equation*}
The parameter space is illustrated in Figure \ref{fig:model_2k}, where
$\Omega_{2d-2}$ corresponds to the origin.

\begin{figure}[htbp]
\begin{center}
\includegraphics[width=4.5cm]{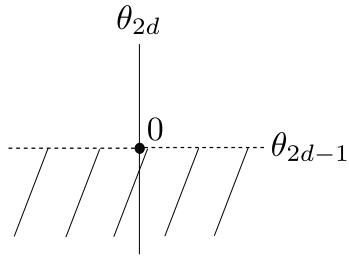}
\caption{Model of order $2d-2$ within the model of order $2d$}
\label{fig:model_2k}
\end{center}
\end{figure}

Here we need to do more careful analysis  than in Section \ref{sec:test}.
The difficulty in this case is that 
$A_{2d}(\Btheta)$ in \eqref{eq:norm-constant-two-sided} is infinite
for $\theta_{2d-1}\neq 0, \theta_{2d}=0$: 
\[
A(\theta_1,\dots,\theta_{2d},\theta_{2d-1},0) = \infty , \qquad \forall \theta_{2d-1}\neq 0.
\]
Hence we can not take the partial derivative of $A_{2d}(\Btheta)$ with respect to $\theta_{2d-1}$ at
$(\theta_1,\dots,\theta_{2d-2},0,0)$.
However 
$\partial_{2d-1} A(\theta_1,\dots,\theta_{2d})$ and $\partial_{2d} A(\theta_1,\dots,\theta_{2d})$
exist, as long as $\theta_{2d}< 0$.
Also if $\theta_{2d-2}<0$, 
as $(\theta_{2d-1},\theta_{2d})\rightarrow (0,0)$ in such a way that
$|\theta_{2d-1}/\theta_{2d}|$ is bounded,  
by the dominated convergence theorem we have
\begin{align*}
  \lim_{(\theta_{2d-1},\theta_{2d})\rightarrow (0,0) \atop |\theta_{2d-1}/\theta_{2d}|\,:\,\text{bounded}} 
\partial_{2d-1} A(\theta_1,\dots,\theta_{2d})
  &= \int_{-\infty}^\infty x^{2d-1} \exp(\theta_1 x + \dots + \theta_{2d-2} x^{2d-2}) \R{d}x \nonumber \\
&=A(\Btheta_{2d-2}) E_{\Btheta_{2d-2}}(X^{2d-1}),
\end{align*}
and
\[
\lim_{(\theta_{2d-1},\theta_{2d})\rightarrow (0,0) \atop |\theta_{2d-1}/\theta_{2d}|\,:\,\text{bounded}} 
\partial_{2d} A(\theta_1,\dots,\theta_{2d})
=A(\Btheta_{2d-2}) E_{\Btheta_{2d-2}}(X^{2d}),
\]
where $\Btheta_{2d-2}=(\theta_1,\dots,\theta_{2d-2},0,0)$, $\theta_{2d-2}<0$ and 
$E_{\Btheta_{2d-2}}$ denotes the expected value under  $\Btheta_{2d-2}$.
Let $\hat\Btheta_{2d-2}$ denote MLE under $H_0$.

We now redefine the $(2d)\times (2d)$ Fisher information matrix $I(\Btheta)$ at
$\Btheta_{2d-2}$ as
\[
\tilde I(\Btheta_{2d-2})= 
\begin{bmatrix} \tilde I_{2d-2,2d-2}(\Btheta_{2d-2}) & \tilde I_{2d-2,2d}(\Btheta_{2d-2}) \\
 \tilde I_{2d,2d-2}(\Btheta_{2d-2}) & \tilde I_{2d,2d}(\Btheta_{2d-2})
\end{bmatrix}
=\lim_{(\theta_{2d-1},\theta_{2d})\rightarrow (0,0) \atop |\theta_{2d-1}/\theta_{2d}|\,:\,\text{bounded}}
\begin{bmatrix} I_{2d-2,2d-2}(\Btheta) & I_{2d-2,2d}(\Btheta)\\
 I_{2d,2d-2}(\Btheta) & I_{2d,2d}(\Btheta)
\end{bmatrix},
\]
where $I_{2d,2d}$ is a $2\times 2$ matrix.  
{\red 
Since the elements of $\tilde I(\Btheta_{2d-2})$ are defined by the moments and any polynomial
function of $X$ is not degenerate, $\tilde I(\Btheta_{2d-2})$ is non-singular.
}
Define
\[
\tilde I_{2d,2d\cdot 1,\dots,2d-2}(\Btheta)=\tilde I_{2d,2d}(\Btheta) -  
\tilde I_{2d,2d-2}(\Btheta)  \tilde I_{2d-2,2d-2}(\Btheta)^{-1} \tilde I_{2d-2,2d}(\Btheta).
\]

For testing $H_0$ we again propose to use a score statistic
\begin{equation}
\label{eq:test-statistic-whole-line}
T_{2d-2}=
n[\partial_{2d-1} \bar l(\hat\Btheta_{2d-2};\Bx), \partial_{2d}\bar l(\hat\Btheta_{2d-2};\Bx)] \ 
\tilde I_{2d,2d\cdot 1,\dots,2d-2}(\hat\Btheta_{2d-2})^{-1}
\ \begin{bmatrix}\partial_{2d-1} \bar l(\hat\Btheta_{2d-2};\Bx) \\
\partial_{2d}\bar l(\hat\Btheta_{2d-2};\Bx)
\end{bmatrix}. %
\end{equation}
We reject $H_0$ if
\begin{equation}
T_{2d-2} 
\ge \chi^2_2(\alpha), 
\label{eq:dist-conv-whole1}
\end{equation}
where $\chi^2_2(\alpha)$ is the upper $\alpha$-quantile of the $\chi^2$ distribution with two degrees of freedom. Numerical performance of this test is confirmed in the next subsection.

\subsection{Numerical experiments for the whole line}
For checking the asymptotic distribution of the MLE, 
we compare the empirical distribution of 
$p_i$ in \eqref{eq:mle-component-asymptotic-theory-define}
with $\R{N}(0,1)$
for $2d=4$ and $\B{\theta}^{\ast} = (1,4,-2,-3)$.
For checking  \eqref{eq:dist-conv-whole1}
we compare the empirical distribution of $T_{2d-2}$ of \eqref{eq:test-statistic-whole-line}
with the $\chi^2$ distribution with 2 degrees of freedom.
For $2d-2=2$ we choose $\B{\theta}^{\ast} = (2,-1,0,0)$.

\begin{figure}[htbp]
\begin{center}
\includegraphics[width=3.9cm]{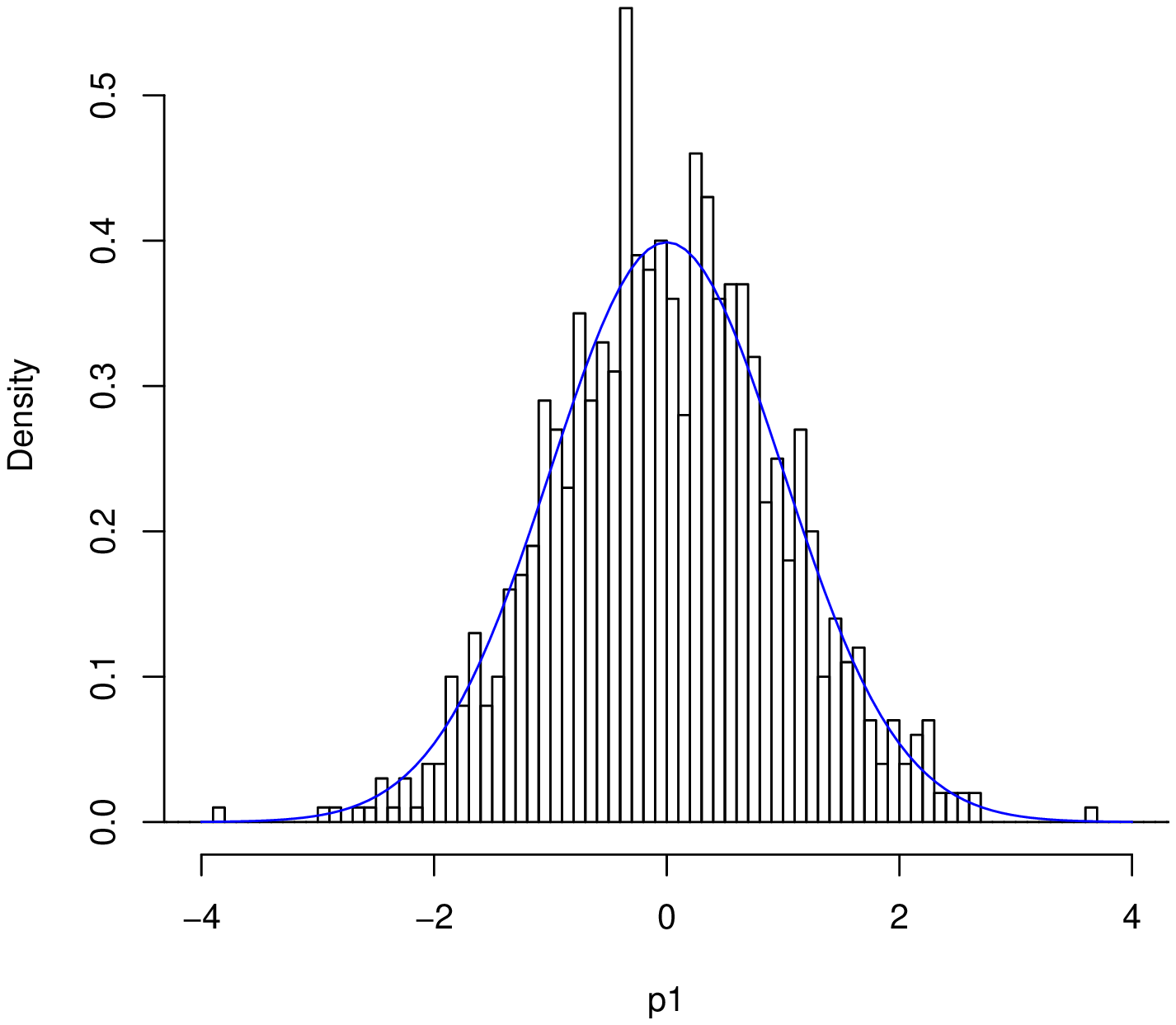}
\includegraphics[width=3.9cm]{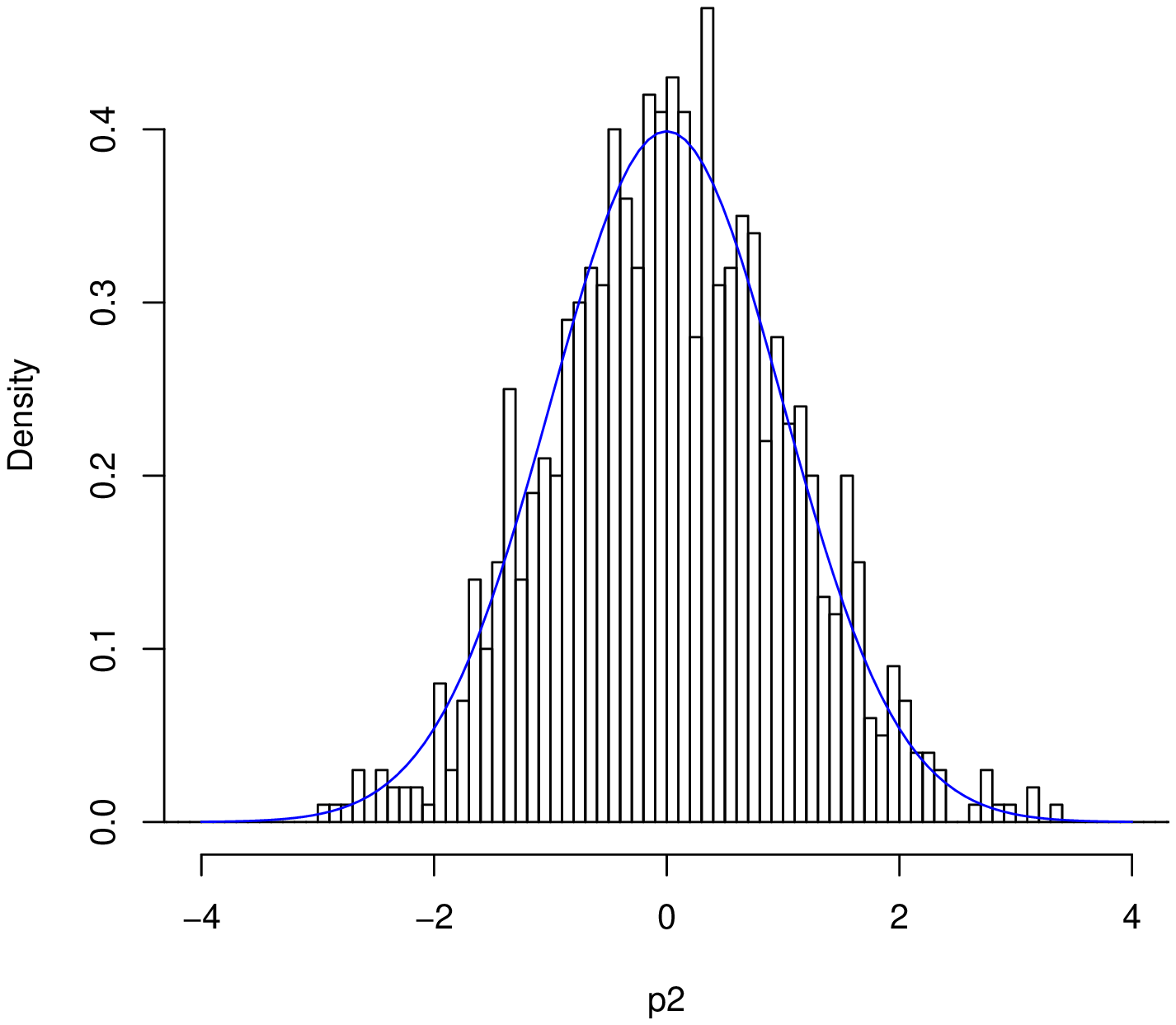}
\includegraphics[width=3.9cm]{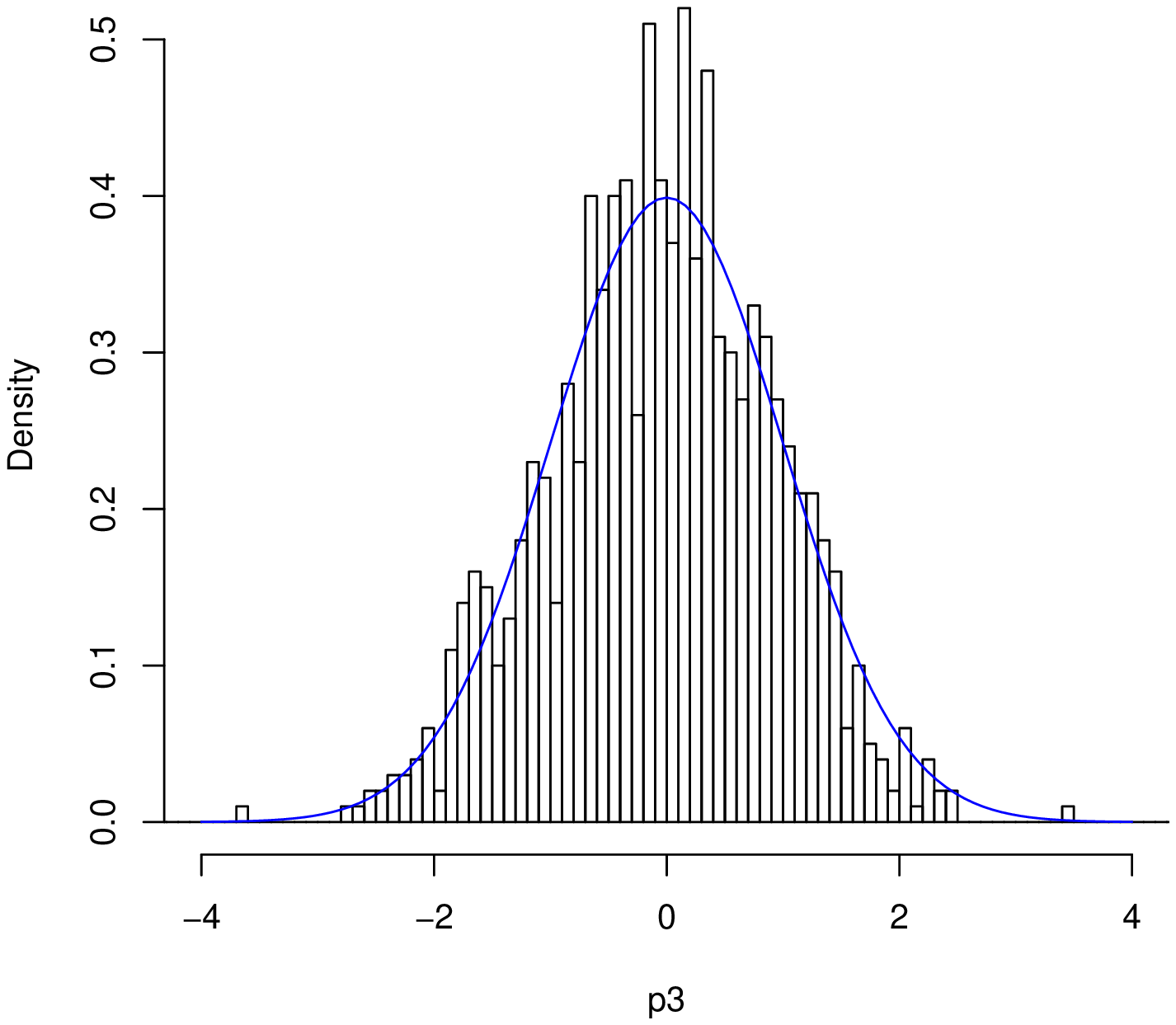}
\includegraphics[width=3.9cm]{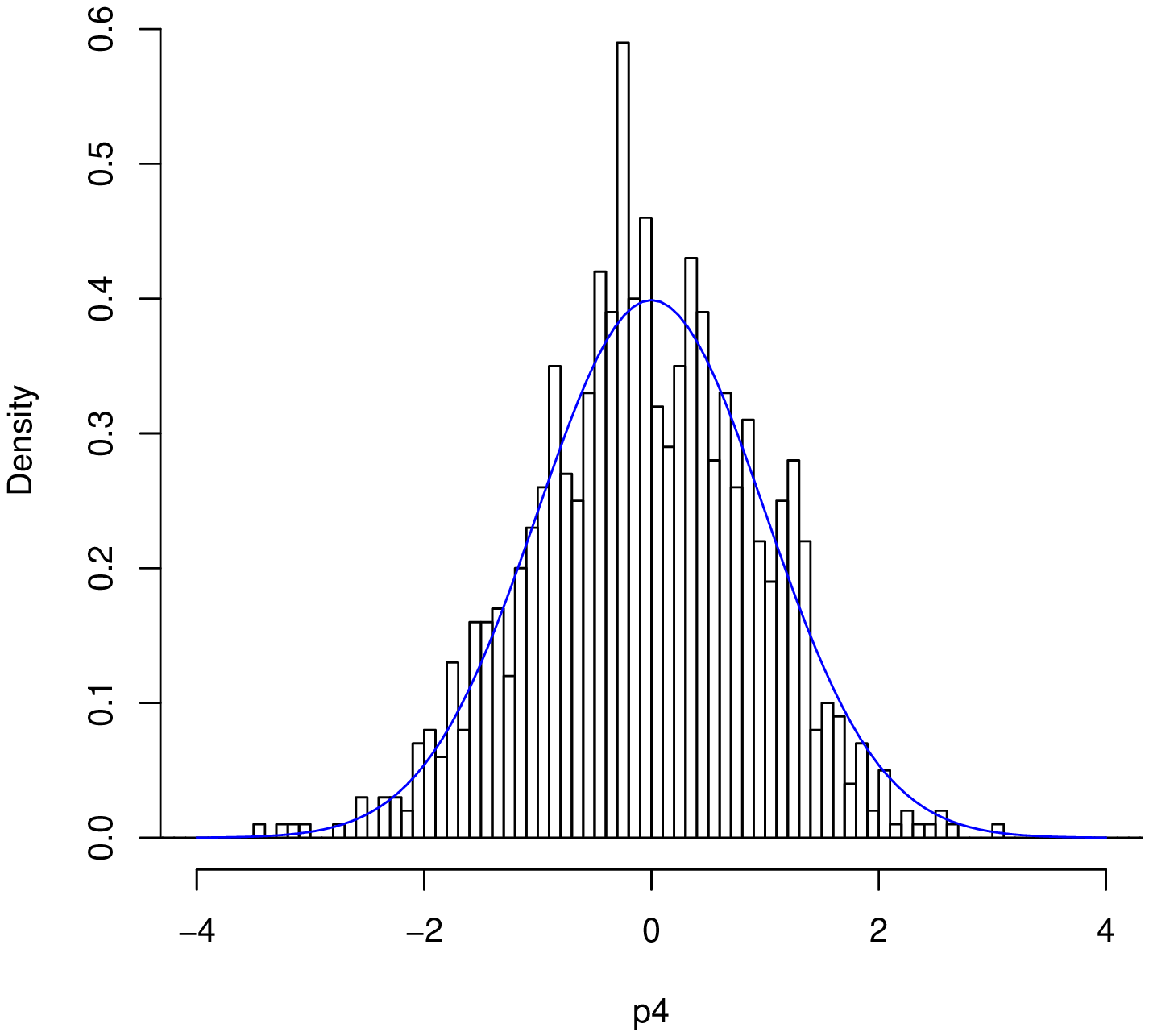}
\caption{Histogram $p_i$ and the density of $\R{N}(0,1)$ for $2d=4$%
}
\label{fig:whole_4}
\end{center}
\end{figure}
\begin{figure}[htbp]
\begin{center}
\includegraphics[width=5.5cm]{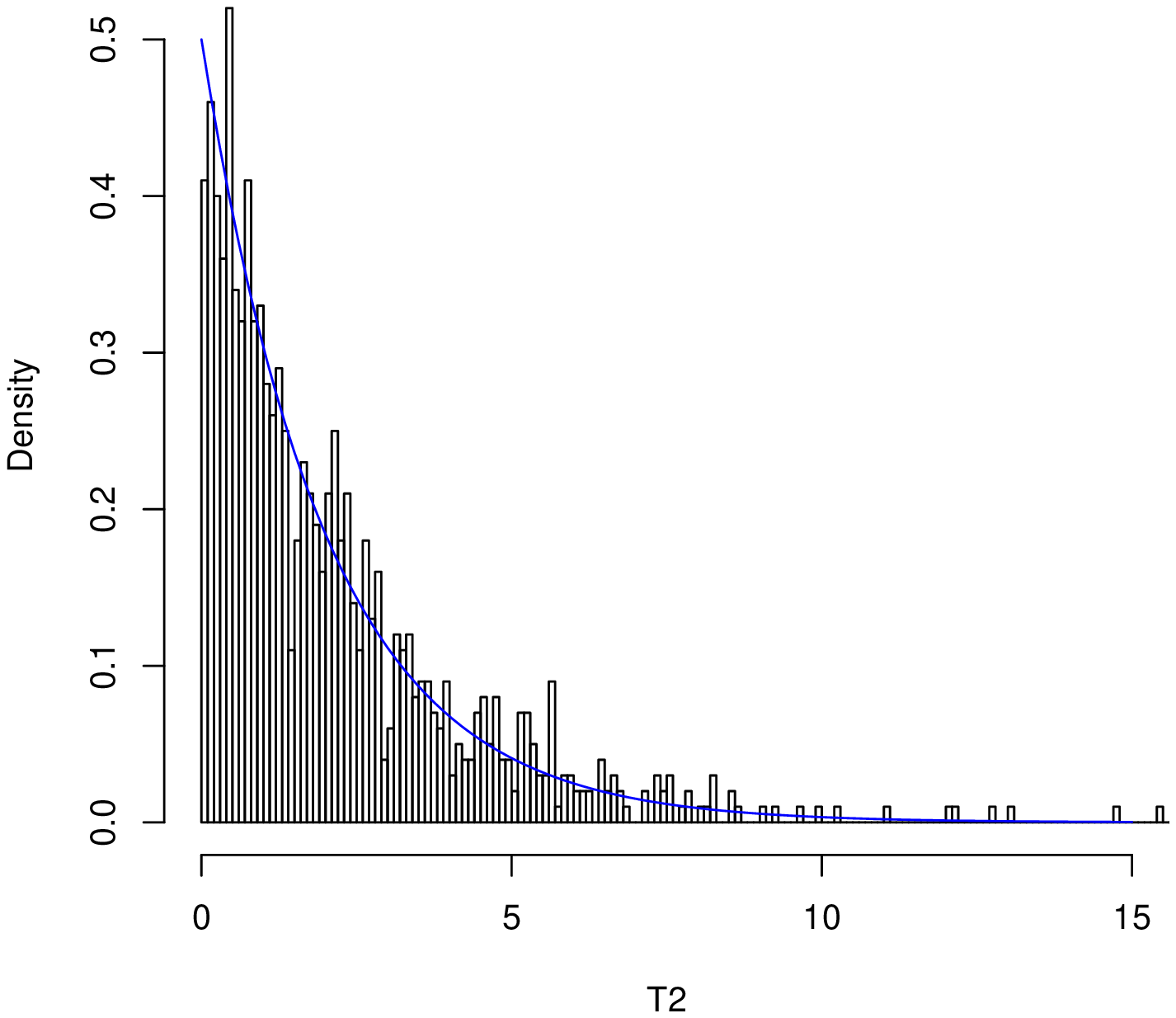}
\includegraphics[width=5.5cm]{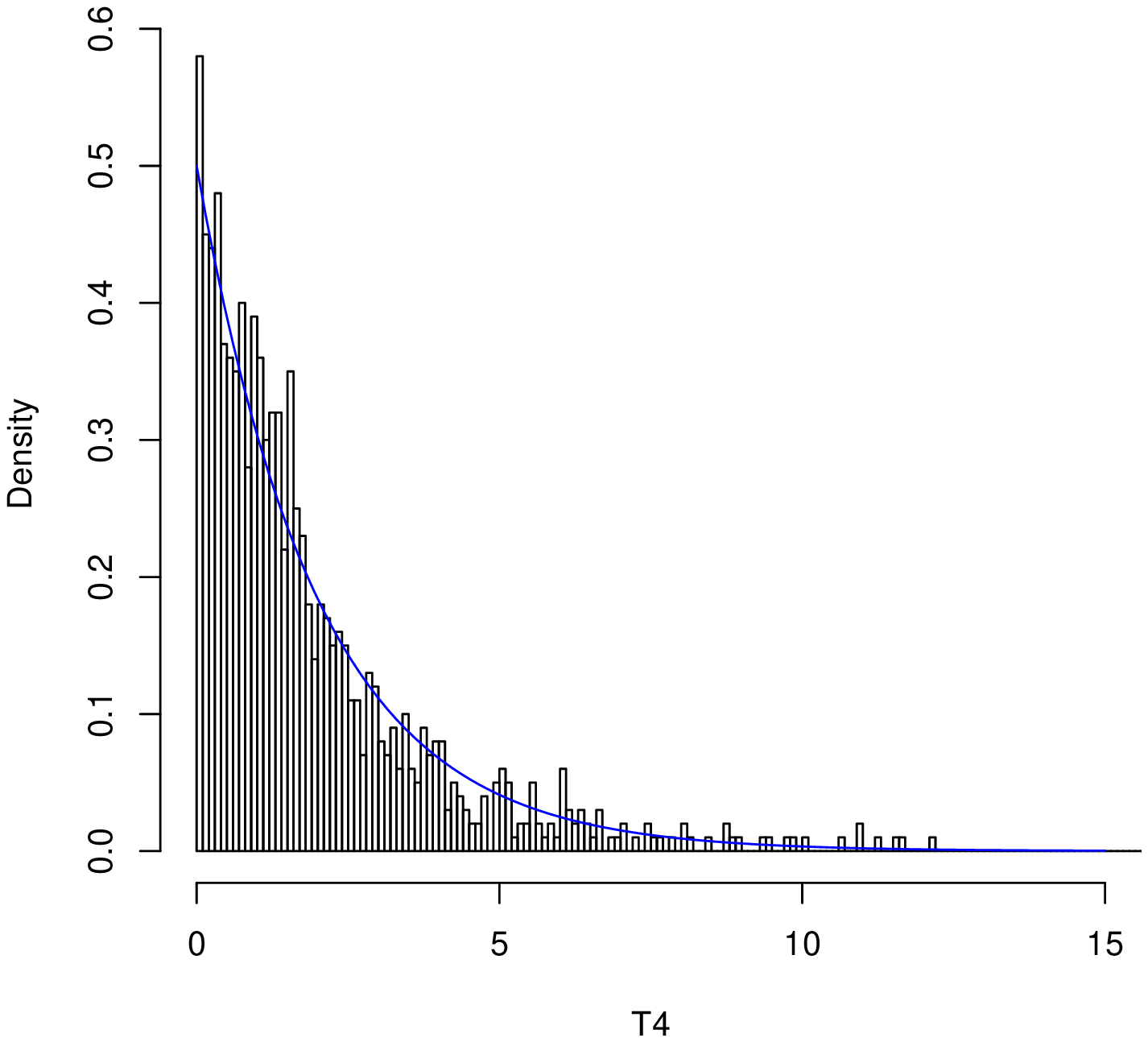}
\caption{Histogram of $T_{2d-2}$ of \eqref{eq:test-statistic-whole-line} and the density of $\chi^2(2)$ for $2d=4,6$.
}
\label{fig:whole_test}
\end{center}
\end{figure}
Figure \ref{fig:whole_4} shows for $2d=4$ the histogram of 
$p_i$ and the density of $\R{N(0,1)}$. We see that they agree with each other.
Figure \ref{fig:whole_test} shows for $2d=4,6$ the histogram of 
$T_{2d-2}$ of \eqref{eq:test-statistic-whole-line}
and the density of the chi-square distribution with $2$ d.f.  We again see a good agreement.

\section{Bivariate exponential-polynomial distribution on the positive orthant}
\label{sec:bivariate-positive}
In this section we develop holonomic gradient descent for bivariate exponential-polynomial distribution
on the positive orthant. The differential equations needed for HGD
are more difficult to derive than in the univariate case.  Also the problem of singularity of
the system of differential equations arises in the bivariate case.

Let 
\begin{align*}
h(\B{\theta},x,y) 
&= \exp \left( \sum\limits_{0 \le i+j \le d} \theta_{ij}x^i y^j \right) \\
&= \exp(\theta_{10} x + \theta_{01}y + \theta_{20} x^2 +\theta_{11} xy + \theta_{02} y^2 + \cdots + \theta_{d0}x^d + \cdots + \theta_{0d}y^d)
\end{align*}
and consider the density function
\[
f(x,y;\Btheta) = \frac{{1}}{A(\B{\theta})}h(\B{\theta},x,y) , 
\]
where 
\[
A(\B{\theta}) = \int_0^\infty \int_0^\infty h(\B{\theta},x,y) \R{d}x \R{d}y
\]
is the normalizing constant.  We call this distribution 
a bivariate exponential-polynomial distribution of degree $d$. 
Here the parameter vector $\B{\theta}$ belongs to the 
parameter space %
\begin{equation}
\label{eq:p-space}
\B{\Theta} = \{ \B{\theta} \mid A(\B{\theta}) < \infty \} .
\end{equation}
We consider the structure of $\B{\Theta}$ below in Section \ref{subsec:bivariate-parameter-space}.  
We note that
if $\B{\theta} \in \B{\Theta}$, then $h(\B{\theta},x,y)$ satisfies
\begin{align}
h(\B{\theta},x,y) &\rightarrow 0 \qquad(x \rightarrow \infty),\label{eq:proposal_x} \\
h(\B{\theta},x,y) &\rightarrow 0 \qquad(y \rightarrow \infty). \nonumber %
\end{align}

Given the sample $\B{z}=\{(x_{i},y_{i})\}_{i=1}^n$, $(1/n)$ times 
the log-likelihood function is written as
\begin{align}
\bar l(\B{\theta},\B{z})
&=  \sum\limits_{1 \le s+t \le d} \theta_{st} \overline{x^s y^t} - \log A(\B{\theta}) \label{eq:subject-2}\\
&= \theta_{10} \overline{x} + \theta_{01} \overline{y} + \cdots + \theta_{st} \overline{x^s y^t} + \cdots + \theta_{d0} \overline{x^d} + \cdots + \theta_{0d} \overline{y^d} - \log A(\B{\theta}), \nonumber 
\end{align}
{\red where $\overline{x^s y^t}=(1/n)\sum_{i=1}^n x_i^s y_i^t$.}
From \eqref{eq:subject-2} the gradient vectors is given as 
\begin{equation}
\label{eq:gradient-2}
\nabla \bar l(\B{\theta},\B{z})
=
\begin{bmatrix}
\overline{x}\\
\overline{y}\\
\vdots\\
\overline{x^s y^t}\\
\vdots\\
\overline{x^d}\\
\overline{x^{d-1}y}\\
\vdots\\
\overline{y^d}
\end{bmatrix}
-
\frac{1}{A(\B{\theta})}
\begin{bmatrix}
\partial_{10}A(\B{\theta})\\
\partial_{01}A(\B{\theta})\\
\vdots\\
\partial_{st}A(\B{\theta})\\
\vdots \\
\partial_{d0}A(\B{\theta})\\
\partial_{d-1,1}A(\B{\theta})\\
\vdots\\
\partial_{0d}A(\B{\theta})
\end{bmatrix} ,
\end{equation}
where $\partial_{ij}=\partial/\partial \theta_{ij}$. 
As in the univariate case we would like to avoid numerical integration for 
$\partial_{ij}A(\B{\theta})$, $0 \le i+j \le d$, in every step of iteration for obtaining MLE.

\subsection{Maximum likelihood estimation for the bivariate case}
We first derive differential equations satisfied by $A(\B{\theta})$.
Since there are terms like $xy$, we need to obtain different types of differential equations, which
were not needed in the univariate case.  

Let 
\begin{align}
A_x(\B{\theta}) 
&= \int_0^\infty h(\B{\theta},x,0)\R{d}x = \int_0^\infty \exp(\theta_{10}x + \theta_{20} x^2 + \cdots + \theta_{d0} x^d)\R{d}x, \label{eq:A_x} \\
A_y(\B{\theta}) 
&= \int_0^\infty h(\B{\theta},0,y)\R{d}y = \int_0^\infty \exp(\theta_{01}y + \theta_{02} y^2 + \cdots + \theta_{0d} y^d)\R{d}y \label{eq:A_y}. 
\end{align}
The values of 
\eqref{eq:A_x}, \eqref{eq:A_y} and their derivatives with respect to $\theta_{ij}$
can be obtained easily from our results for the univariate case.  Hence in the following 
derivation we treat them as known or already evaluated.

We differentiate $A(\B{\theta})$ by $\theta_{01}$ or $\theta_{10}$.  Then 
\[
\partial_{10} A(\B{\theta}) = \int_0^\infty \int_0^\infty x  h(\B{\theta},x,y) 
\R{d}x \R{d}y, \quad 
\partial_{01} A(\B{\theta}) = \int_0^\infty \int_0^\infty y h(\B{\theta},x,y) 
\R{d}x \R{d}y
\]
and we have 
\begin{equation}
\label{eq:diff-eq-3-1-2}
\partial_{10}^{s} \partial_{01}^{t} A(\B{\theta}) = \int_0^\infty \int_0^\infty x^s y^t 
h(\B{\theta},x,y) 
\R{d}x \R{d}y.
\end{equation}
On the other hand,
\begin{equation}
\label{eq:diff-eq-3-2-2}
\partial_{st} A(\B{\theta}) = \int_0^\infty \int_0^\infty x^s y^t 
h(\B{\theta},x,y) 
\R{d}x \R{d}y.
\end{equation}
From 
\eqref{eq:diff-eq-3-1-2}, \eqref{eq:diff-eq-3-2-2} we have
\[
(\partial_{st} - \partial_{10}^{s} \partial_{01}^{t}) A(\B{\theta}) = 0.
\]
Furthermore corresponding to Theorem \ref{thm:diff-eq-1}, we have the following theorem.
\begin{theorem}
$A(\B{\theta})$ satisfies the following differential equations.
\label{thm:diff-eq-1-2}
\begin{align}
\left( \sum\limits_{1 \le i+j \le d, 1 \le i } i \theta_{ij} \partial_{10}^{i-1} \partial_{01}^{j} \right) A(\B{\theta})
&= -A_y (\B{\theta}), \label{eq:diff-eq-1-2-x} \\
\left( \sum\limits_{1 \le i+j \le d, 1 \le j } j \theta_{ij} \partial_{10}^{i} \partial_{01}^{j-1} \right) A(\B{\theta})
&= -A_x (\B{\theta}). \label{eq:diff-eq-1-2-y}
\end{align}
\end{theorem}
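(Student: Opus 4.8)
The plan is to follow the template of the proof of Theorem~\ref{thm:diff-eq-1}: integrate an exact derivative of $h(\B{\theta},x,y)$ over the positive orthant, evaluate the boundary term using the decay in \eqref{eq:proposal_x}, and re-express the resulting moments as derivatives of $A(\B{\theta})$ via \eqref{eq:diff-eq-3-1-2}. The only genuinely new feature relative to the univariate case is that the integral is two-dimensional, so I would first integrate in the variable matching the derivative and then integrate the left-over one-dimensional quantity. For \eqref{eq:diff-eq-1-2-x} I would start from
\[
\partial_x h(\B{\theta},x,y) = \Bigl( \sum_{1 \le i+j \le d,\ 1 \le i} i\,\theta_{ij}\, x^{i-1} y^j \Bigr) h(\B{\theta},x,y),
\]
integrate over $x \in (0,\infty)$, and use $h(\B{\theta},0,y) = \exp(\theta_{01}y + \theta_{02}y^2 + \dots + \theta_{0d}y^d)$ together with \eqref{eq:proposal_x} to obtain
\[
\int_0^\infty \partial_x h(\B{\theta},x,y)\,\R{d}x = \bigl[ h(\B{\theta},x,y) \bigr]_{x=0}^{x=\infty} = -h(\B{\theta},0,y).
\]

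Next I would integrate this identity over $y \in (0,\infty)$. On the right-hand side, $\int_0^\infty h(\B{\theta},0,y)\,\R{d}y = A_y(\B{\theta})$ by definition \eqref{eq:A_y}. On the left-hand side, after interchanging the order of the two integrations and differentiating under the integral sign, each summand contributes $i\,\theta_{ij}\int_0^\infty\!\!\int_0^\infty x^{i-1}y^j h(\B{\theta},x,y)\,\R{d}x\,\R{d}y = i\,\theta_{ij}\,\partial_{10}^{i-1}\partial_{01}^{j}A(\B{\theta})$ by \eqref{eq:diff-eq-3-1-2}, so the left-hand side equals $\bigl( \sum_{1 \le i+j \le d,\ 1 \le i} i\,\theta_{ij}\, \partial_{10}^{i-1}\partial_{01}^{j} \bigr) A(\B{\theta})$, which gives \eqref{eq:diff-eq-1-2-x}. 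Equation \eqref{eq:diff-eq-1-2-y} follows by the same argument with the roles of $x$ and $y$ (and of the two subscripts) interchanged, using the $y$-analogue of \eqref{eq:proposal_x}, relation \eqref{eq:diff-eq-3-1-2} once more, and $\int_0^\infty h(\B{\theta},x,0)\,\R{d}x = A_x(\B{\theta})$ from \eqref{eq:A_x}.

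The step that needs real care, and that I expect to be the main obstacle, is the justification of the two interchanges of limit operations used above: Fubini's theorem for the double integral of $\partial_x h$ (whose integrand changes sign, so absolute integrability must be checked), and differentiation under the integral sign, which is what licenses the identification of the moment $\int\!\int x^{i-1}y^j h\,\R{d}x\,\R{d}y$ with $\partial_{10}^{i-1}\partial_{01}^{j}A(\B{\theta})$. Both are unproblematic for $\B{\theta}$ in the interior of $\B{\Theta}$, where $A$ is finite on a neighbourhood, hence all mixed moments of $h(\B{\theta},\cdot,\cdot)$ are finite and admit a dominating function uniform on that neighbourhood; this also ensures $A_x(\B{\theta}),A_y(\B{\theta})<\infty$, so the boundary terms are well defined. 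These are the same verifications already implicit in the univariate case, so I would state them once and not reproduce the $\varepsilon$–$\delta$ details.
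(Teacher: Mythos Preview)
Your proposal is correct and follows essentially the same argument as the paper: integrate $\partial_x h$ over the positive orthant, evaluate one side via the boundary term and \eqref{eq:proposal_x} to get $-A_y(\B{\theta})$, and rewrite the other side as moments converted to derivatives of $A(\B{\theta})$ via \eqref{eq:diff-eq-3-1-2}, with the second equation by symmetry. Your additional remarks on justifying Fubini and differentiation under the integral sign are not spelled out in the paper but are appropriate caveats.
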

\begin{proof}
By symmetry we only show \eqref{eq:diff-eq-1-2-x}.
We have
\begin{align}
\int_0^\infty \int_0^\infty \partial_x h(\B{\theta},x,y)\R{d}x\R{d}y
&= \int_0^\infty \left\{ \int_0^\infty \partial_x h(\B{\theta},x,y)\R{d}x \right\} \R{d}y \nonumber \\
&= \int_0^\infty \left[h(\B{\theta},x,y) \right]_{x=0}^{x=\infty} \R{d}y %
=-\int_{0}^{\infty}h(\boldsymbol{\theta},0,y)\mathrm{d}y \qquad (\text{by } \eqref{eq:proposal_x}) \nonumber \\
& =-A_y(\B{\theta}) \label{eq:diff-eq-1-2-x-1}.
\end{align}
On the other hand,
\allowdisplaybreaks
\begin{align}
\int_0^\infty \int_0^\infty \partial_x h(\B{\theta},x,y)\R{d}x\R{d}y
&= \int_0^\infty \int_0^\infty \partial_x \exp\big(\sum\limits_{0 \le i+j \le d} \theta_{ij}x^i y^j\big) \R{d}x\R{d}y \nonumber \\
&= \int_0^\infty \int_0^\infty \left( \sum\limits_{1 \le i+j \le d, 1 \le i} i \theta_{ij}x^{i-1} y^j \right) \exp(\sum\limits_{0 \le i+j \le d} \theta_{ij}x^i y^j) \R{d}x \R{d}y \nonumber \\
&= \left( \sum\limits_{1 \le i+j \le d, 1 \le i} i \theta_{ij} \partial_{10}^{i-1} \partial_{01}^j \right) \int_0^\infty \int_0^\infty h(\B{\theta},x,y) \R{d}x \R{d}y \qquad (\text{by } \eqref{eq:diff-eq-3-1-2}) \nonumber \\
&= \left( \sum\limits_{1 \le i+j \le d, 1 \le i} i \theta_{ij} \partial_{10}^{i-1} \partial_{01}^j \right) A(\B{\theta}) \label{eq:diff-eq-1-2-x-2}.
\end{align}
\eqref{eq:diff-eq-1-2-x} follows from \eqref{eq:diff-eq-1-2-x-1} and \eqref{eq:diff-eq-1-2-x-2}.
\end{proof}

In the univariate case, the important  fact was that higher-order derivatives of 
$A_d(\B{\theta})$ are written as rational function combinations of lower-order derivatives of
$A_d(\B{\theta})$.
In \eqref{eq:diff-eq-1-2-x}, \eqref{eq:diff-eq-1-2-y}, the highest order of derivatives in 
$A_d(\B{\theta})$ is $d-1$ and there are $d$  derivatives of order $d-1$:
\[
\partial_{10}^{d-1},\partial_{10}^{d-2}\partial_{01},\cdots,\partial_{10}\partial_{01}^{d-2},\partial_{01}^{d-1}.
\]
If  we want to evaluate these $d$ derivatives of order $d-1$ by solving a
system of equations, then we do not have enough equations for $d\geq 3$, because there are only two equations
 in Theorem \ref{thm:diff-eq-1-2}. We need to have more differential equations.

To obtain more equations, we operate the following set 
\[
O_{q}=\{\partial_{10}^{q},\partial_{10}^{q-1}\partial_{01},\partial_{10}^{q-2}\partial_{01}^{2},\cdots,\partial_{01}^{q}\}
\]
of $q+1$ differential operators of the same order $q$ to \eqref{eq:diff-eq-1-2-x} and \eqref{eq:diff-eq-1-2-y}.
In order to determine $q$, we count the number of differential equations obtained after operating $O_q$.

The highest order of derivatives after operating 
$O_{q}$ to \eqref{eq:diff-eq-1-2-x}, \eqref{eq:diff-eq-1-2-y} is $q+d-1$ and 
there are the following $q+d$ derivatives
\[
\partial_{10}^{q+d-1},\partial_{10}^{q+d-2}\partial_{01},\cdots,\partial_{10}\partial_{01}^{q+d-2},\partial_{01}^{q+d-1}.
\]
On the other hand there are $2(q+1)$ 
differential equations after  operating $O_{q}$ to  \eqref{eq:diff-eq-1-2-x}, \eqref{eq:diff-eq-1-2-y}.
Hence we have the right number of equations if 
we take  $q+d = 2(q+1)$ or 
\[
q = d-2.
\]
In view of 
\[
\partial_{10} A_y(\B{\theta}) 
=0, \qquad 
\partial_{01} A_x(\B{\theta}) 
=0, 
\]
when we operate 
\[
O_{d-2}=\{\partial_{10}^{d-2},\partial_{10}^{d-3}\partial_{01},\partial_{10}^{d-4}\partial_{01}^{2},\cdots,\partial_{01}^{d-2}\}
\]
to \eqref{eq:diff-eq-1-2-x}, \eqref{eq:diff-eq-1-2-y}, we have the following system of 
differential equations.
\allowdisplaybreaks
\begin{align}
\begin{bmatrix}
\partial_{10}^{d-2} & 0\\
\partial_{10}^{d-3}\partial_{01} & \vdots\\
\vdots & \vdots\\
\partial_{10}\partial_{01}^{d-3} & \vdots\\
\partial_{01}^{d-2} & 0\\
0 & \partial_{10}^{d-2}\\
\vdots & \partial_{10}^{d-3}\partial_{0\,1}\\
\vdots & \vdots\\
\vdots & \partial_{10}\partial_{01}^{d-3}\\
0 & \partial_{01}^{d-2}
\end{bmatrix}
\begin{bmatrix}
\left( \sum\limits_{1 \le i+j \le d, 1 \le i } i \theta_{ij} \partial_{10}^{i-1} \partial_{01}^{j} \right) A(\B{\theta})\\
\left( \sum\limits_{1 \le i+j \le d, 1 \le j } j \theta_{ij} \partial_{10}^{i} \partial_{01}^{j-1} \right) A(\B{\theta})\\
\end{bmatrix}
&= 
\begin{bmatrix}
\partial_{10}^{d-2} & 0\\
\partial_{10}^{d-3}\partial_{01} & \vdots\\
\vdots & \vdots\\
\partial_{10}\partial_{01}^{d-3} & \vdots\\
\partial_{01}^{d-2} & 0\\
0 & \partial_{10}^{d-2}\\
\vdots & \partial_{10}^{d-3}\partial_{01}\\
\vdots & \vdots\\
\vdots & \partial_{10}\partial_{01}^{d-3}\\
0 & \partial_{01}^{d-2}
\end{bmatrix}
\begin{bmatrix}
-A_{y}(\B{\theta})\\
-A_{x}(\B{\theta})
\end{bmatrix} %
=
-\begin{bmatrix}
0\\
\vdots\\
0\\
\partial_{01}^{d-2}A_{y}(\B{\theta})\\
\partial_{10}^{d-2}A_{x}(\B{\theta})\\
0\\
\vdots\\
0
\end{bmatrix} \label{eq:sim-equations-1}
\end{align}
We transform \eqref{eq:sim-equations-1} to a system of differential equations 
to solve for the derivatives of the highest order
\[
\partial_{10}^{2d-3},\partial_{10}^{2d-4}\partial_{01},\cdots,\partial_{10}\partial_{01}^{2d-4},\partial_{01}^{2d-3}.
\]
For any pair of non-negative integers $(s,t)$ satisfying $s+t=d-2$
let 
\begin{equation}
\label{eq:phi-psi}
\begin{split}
\phi(s,t)
&= s \partial_{10}^{s-1} \partial_{01}^t + \theta_{10} \partial_{10}^s \partial_{01}^t + \sum\limits_{2 \le i+j \le d-1, 1 \le i} i \theta_{ij} \partial_{10}^{s+i-1} \partial_{01}^{j+t} , \\
\psi(s,t)
&= t \partial_{10}^s \partial_{01}^{t-1} + \theta_{01} \partial_{10}^s \partial_{01}^t + \sum\limits_{2 \le i+j \le d-1, 1 \le j} j \theta_{ij} \partial_{10}^{s+i} \partial_{01}^{t+j-1} .
\end{split}
\end{equation}
Then \eqref{eq:sim-equations-1} is transformed to 
\begin{equation}
\begin{cases}
\sum\limits_{i+j = d ,1 \le i} i \theta_{ij} \partial_{10}^{s+i-1} \partial_{01}^{t+j}A(\B{\theta})
= - \partial_{10}^s \partial_{01}^t A_y (\B{\theta}) - \phi(s,t) A(\B{\theta}),& \\
\sum\limits_{i+j = d, 1 \le j} j \theta_{ij} \partial_{10}^{s+i} \partial_{01}^{t+j-1}A(\B{\theta})
= - \partial_{10}^s \partial_{01}^t A_x (\B{\theta}) - \psi(s,t) A(\B{\theta}). &
\end{cases}  \label{eq:sim_equations-2}\
\end{equation}
In matrix form \eqref{eq:sim_equations-2} is expressed as 
\begin{equation*}
P(\B{\theta})
\begin{bmatrix}
\partial_{10}^{2d-3}A(\B{\theta})\\
\partial_{10}^{2d-4}\partial_{01}A(\B{\theta})\\
\vdots \\
\partial_{10}\partial_{01}^{2d-4}A(\B{\theta})\\
\partial_{01}^{2d-3}A(\B{\theta})
\end{bmatrix}
=
Q(\B{\theta}), 
\end{equation*}
where 
\begin{equation}
\label{eq:co-matrix}
\begin{matrix}
P(\B{\theta})
=\begin{bmatrix}
d\theta_{d0} & \cdots & \cdots & 2\theta_{2,d-2} & \theta_{1,d-1}\\
& d\theta_{d0} & \cdots &\cdots & 2\theta_{2,d-2} & \theta_{1,d-1}\\
&&& \cdots & \cdots\\
&&& d\theta_{d0} & \cdots &\cdots & 2\theta_{2,d-2} & \theta_{1, d-1}\\
\theta_{d-1,1} & 2\theta_{d-2,2} & \cdots & \cdots & d\theta_{0d}\\
& \theta_{d-1,1} & 2\theta_{d-2,2} & \cdots & \cdots & d\theta_{0d}\\
&&& \cdots & \cdots \\
&&& \theta_{d-1,1} & 2\theta_{d-2,2} & \cdots & \cdots & d\theta_{0d} \\
\end{bmatrix}
\begin{aligned}
&\left.\begin{matrix}
\\
\\
\\
\\
\end{matrix} \right\}
\text{$(d-1)$ rows}\\    
&\left.\begin{matrix}
\\
\\
\\
\\
\end{matrix} \right\}
\text{$(d-1)$ rows}\\
\end{aligned}
\end{matrix}, 
\end{equation}
\begin{equation}
\label{eq:right-matrix}
Q(\B{\theta})
=-
\begin{bmatrix}
\phi(d-2,0)A(\B{\theta})\\
\phi(d-3,1)A(\B{\theta})\\
\vdots \\
\phi(1,d-3)A(\B{\theta})\\
\partial_{01}^{d-2}A_y(\B{\theta}) + \phi(0,d-2)A(\B{\theta})\\
\partial_{10}^{d-2}A_x(\B{\theta}) + \psi(d-2,0)A(\B{\theta})\\
\psi(d-3,1)A(\B{\theta})\\
\vdots \\
\psi(1,d-3)A(\B{\theta})\\
\psi(0,d-2)A(\B{\theta})\\
\end{bmatrix}.
\end{equation}
In $P(\B{\theta})$ empty elements in the matrix are zeros.
We give further consideration of $P(\B{\theta})$ in the next section.

If $\det P(\B{\theta}) \neq 0$, 
\begin{equation}
\label{eq:sol}
\begin{bmatrix}
\partial_{10}^{2d-3}A(\B{\theta})\\
\partial_{10}^{2d-4}\partial_{01}A(\B{\theta})\\
\vdots\\
\partial_{10}\partial_{01}^{2d-4}A(\B{\theta})\\
\partial_{01}^{2d-3}A(\B{\theta})
\end{bmatrix}
=P^{-1}(\B{\theta})Q(\B{\theta}).
\end{equation}
Hence from \eqref{eq:phi-psi}, \eqref{eq:right-matrix}, \eqref{eq:sol}
we see that 
 $\partial_{10}^{2d-3}A(\B{\theta}),\partial_{10}^{2d-4}\partial_{01}A(\B{\theta}),\cdots,\partial_{10}\partial_{01}^{2d-4}A(\B{\theta}),\partial_{01}^{2d-3}A(\B{\theta}),$
are written as rational function combinations of elements of the vector 
\begin{equation}
\label{eq:Ftheta2}
F(\B{\theta})=[
A(\B{\theta}),\partial_{10}A(\B{\theta}),\partial_{01}A(\B{\theta}),\cdots,\partial_{10}^{2d-4}A(\B{\theta}),\partial_{10}^{2d-5}\partial_{01}A(\B{\theta}),\cdots,\partial_{10}\partial_{01}^{2d-5}A(\B{\theta}),\partial_{01}^{2d-4}A(\B{\theta})]^{\mathsf{T}}.
\end{equation}
If we can evaluate $F(\B{\theta})$ at any $\B{\theta}$, then 
by \eqref{eq:gradient-2} we can obtain the maximum likelihood estimate of the
bivariate exponential-polynomial distribution.
As in the univariate case, if the initial values of $F(\B{\theta}_0)$
can be evaluated at $\B{\theta}_0$, then the value of $F(\B{\theta})$
at any other point $\B{\theta}$ can be obtained by solving the differential equation.

In the univariate case, the origin $\theta_d=0$ was the only singular point of the
differential equation \eqref{eq:theorem1}. 
In the bivariate case the set $\{\Btheta | \det P(\Btheta)=0\}$
is the set of singularities of \eqref{eq:sim-equations-1}. 
This singularity causes difficulty for HGD and in the next section we investigate 
$\det P(\Btheta)$.

{\red
\begin{remark}
\label{rem:2} 
As in Remark \ref{rem:1}, $A(\Btheta)$ satisfies 
an incomplete $A$-hypergeometric system. 
\end{remark}
}

\subsection{Evaluation of the determinant of the Pfaffian system}

We prove that $\det P(\B{\theta})$ in \eqref{eq:co-matrix} is 
given by the discriminant of a polynomial equation. %
We use the basic results on determinantal expression for  resultants and discriminants 
(cf.\ Chapter 12 of \cite{I.M.Gelfand}, Section 3.3 of \cite{H.Cohen}).
Let two polynomials $f(x)$, $g(x)$ be denoted as
\begin{align}
f(x)
&= a_m x^m + a_{m-1} x^{m-1} + \cdots + a_0 =a_m \prod_{i=1}^m(x-\alpha_i),\label{eq:pol-f}\\
g(x)
&=b_n x^n + b_{m-1} x^{m-1} + \cdots + b_0 
= b_n \prod_{i=1}^n(x-\beta_i). \nonumber %
\end{align}
The resultant $R(f,g)$ is defined as
\[
R(f,g)=a_m^n b_n^m \prod_{i=1}^m \prod_{j=1}^n (\alpha_i - \beta_j).
\]
Then the determinantal expression of $R(f,g)$ is given as follows 
((1.12) of Chapter 12 of \cite{I.M.Gelfand}, Lemma 3.3.4 of \cite{H.Cohen}).
\[
\begin{matrix}
R(f,g)=\det
\begin{bmatrix}
a_m & a_{m-1} & \cdots & \cdots & \cdots & a_0 \\
& a_n & a_{m-1} & \cdots & \cdots & \cdots & a_0 \\
&&&& \cdots & \cdots & \cdots \\
&&& a_m & a_{m-1} & \cdots & \cdots & \cdots & a_0\\
b_n & b_{n-1} & \cdots & b_0 \\
& b_n & b_{n-1} & \cdots & b_0 \\
&&&& \cdots \\
&&&&& \cdots \\
&&&&&& \cdots \\
&&&&& b_n & b_{n-1} & \cdots & b_0 
\end{bmatrix}
\begin{aligned}
&\left.\begin{matrix}
\\
\\
\\
\\
\end{matrix} \right\}
\text{$n$ rows}\\    
&\left.\begin{matrix}
\\
\\
\\
\\
\\
\\
\end{matrix} \right\}
\text{$m$ rows}\\
\end{aligned}
\end{matrix}
\]

We also consider the discriminant.
For $f(x)$ in \eqref{eq:pol-f}
the discriminant for the equation $f(x)=0$
is given by 
\[
D= (-1)^{m(m-1)/2} a_m^{2(m-1)} \prod_{1 \le i < j \le m}(\alpha_i-\alpha_j)^2.
\]
Let 
\begin{equation}
\label{eq:eq-d}
p(x;\B{\theta}) = \theta_{d0} x^d + \theta_{d-1,1} x^{d-1} + \theta_{d-2,2} x^{d-2} + \cdots + \theta_{1,d-1} x + \theta_{0d} .
\end{equation}
This polynomial will also appear in the next section in the investigation of the
parameter space $\B{\Theta}$ in \eqref{eq:p-space}.
The discriminant $D(\B{\theta})$ of the polynomial equation $p(x,\B{\theta})=0$ %
is given as ((1.29) of Chapter 12 of \cite{I.M.Gelfand}, Definition 3.3.3 of \cite{H.Cohen})
\begin{equation}
\label{eq:det-matrix}
D(\B{\theta})=\frac{1}{\theta_{d0}} R(p,p'),
\end{equation}
where 
\[
\begin{matrix}
R(p,p')=%
\det\begin{bmatrix}
\theta_{d0} & \theta_{d-1,1} & \cdots & \cdots & \cdots & \theta_{0d}\\
& \theta_{d0} & \theta_{d-1,1} & \cdots & \cdots & \cdots & \theta_{0d}\\
&&&& \cdots & \cdots &\cdots \\
&&& \theta_{d0} & \theta_{d-1,1} & \cdots & \cdots & \cdots & \theta_{0d}\\
d\theta_{d0} & \cdots & \cdots & 2\theta_{2,d-2} & \theta_{1,d-1}\\
&d\theta_{d0} & \cdots & \cdots & 2\theta_{2,d-2} & \theta_{1,d-1}\\
&&& \cdots & \cdots \\
&&&& \cdots & \cdots \\
&&&& d\theta_{d0} & \cdots & \cdots & 2\theta_{2,d-2} & \theta_{1,d-1}
\end{bmatrix}
\begin{aligned}
&\left.\begin{matrix}
\\
\\
\\
\\
\end{matrix} \right\}
\text{$(d-1)$ rows}\\    
&\left.\begin{matrix}
\\
\\
\\
\\
\\
\end{matrix} \right\}
\text{$d$ rows}\\
\end{aligned}
\end{matrix}.
\]

Using \eqref{eq:det-matrix} we give the following
theorem  on the relation of $\det P(\B{\theta})$ in 
\eqref{eq:co-matrix} and the discriminant $D(\B{\theta})$
of polynomial equation $p(x;\Btheta)=0$ in \eqref{eq:eq-d}.
\begin{theorem}
\label{thm:det-relation}
\begin{equation}
\label{eq:det-relation}
\det P(\B{\theta}) = d^{d-2} D(\B{\theta}).
\end{equation}
\end{theorem}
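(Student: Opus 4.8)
The plan is to recognize the matrix $P(\B{\theta})$ in \eqref{eq:co-matrix} as a Sylvester (resultant) matrix and then evaluate the associated resultant by substituting the critical points of $p(x;\B{\theta})$. Write $c_k=\theta_{d-k,k}$ for $k=0,\dots,d$, so that $p(x;\B{\theta})=\sum_{k=0}^d c_k x^{d-k}$ as in \eqref{eq:eq-d}, and introduce the auxiliary polynomial
\[
g(x)=\theta_{d-1,1}x^{d-1}+2\theta_{d-2,2}x^{d-2}+\cdots+d\theta_{0d}=\sum_{k=1}^d k\,c_k x^{d-k}.
\]
First I would observe that the top $d-1$ rows of $P(\B{\theta})$ are exactly the (shifted) coefficient vector of $p'(x;\B{\theta})$ and the bottom $d-1$ rows are exactly the (shifted) coefficient vector of $g$; since $\deg p'=\deg g=d-1$, the matrix $P(\B{\theta})$ has precisely the block layout of the determinantal expression for a resultant quoted above (with $d-1$ rows for each polynomial), so that $\det P(\B{\theta})=R(p',g)$.

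Next I would use the elementary identity $g(x)=d\,p(x;\B{\theta})-x\,p'(x;\B{\theta})$, which is immediate coefficientwise: the $x^d$ term cancels and the $x^{d-k}$ coefficient equals $\bigl(d-(d-k)\bigr)c_k=k c_k$. Consequently, if $\gamma_1,\dots,\gamma_{d-1}$ denote the roots of $p'$, then $g(\gamma_\ell)=d\,p(\gamma_\ell)$ for every $\ell$ since $p'(\gamma_\ell)=0$. Applying the product-over-roots formula for the resultant, with leading coefficient $\mathrm{lc}(p')=d\theta_{d0}$,
\[
R(p',g)=(d\theta_{d0})^{d-1}\prod_{\ell=1}^{d-1}g(\gamma_\ell)=(d\theta_{d0})^{d-1}d^{\,d-1}\prod_{\ell=1}^{d-1}p(\gamma_\ell),\qquad
R(p',p)=(d\theta_{d0})^{d}\prod_{\ell=1}^{d-1}p(\gamma_\ell),
\]
and eliminating $\prod_\ell p(\gamma_\ell)$ gives $R(p',g)=d^{\,d-2}R(p',p)/\theta_{d0}$. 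Since $R(p',p)=(-1)^{d(d-1)}R(p,p')=R(p,p')$ (the exponent is even), this equals $d^{\,d-2}R(p,p')/\theta_{d0}=d^{\,d-2}D(\B{\theta})$ by \eqref{eq:det-matrix}, which is \eqref{eq:det-relation}.

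For rigor I would note that \eqref{eq:det-relation} is an identity between polynomials in the entries of $\B{\theta}$ — both $\det P(\B{\theta})$ and $D(\B{\theta})$ are polynomials, homogeneous of degree $2d-2$ in $\theta_{d0},\dots,\theta_{0d}$ — so it suffices to verify it on the Zariski-dense set where $\theta_{d0}\neq 0$ and $p'(x;\B{\theta})$ has $d-1$ distinct roots, where the substitution argument is literally valid. The step that needs the most care, and the main place the argument could go wrong, is the identification in the first paragraph: one must check that the number of rows in each block, the shift pattern, and the order in which the coefficients of $p'$ and of $g$ occur in $P(\B{\theta})$ match exactly the convention fixed for $R(p,p')$ above, so that $\det P(\B{\theta})$ equals $R(p',g)$ on the nose rather than $\pm R(p',g)$ or a scalar multiple of it; once that bookkeeping is settled, the remainder is the short computation sketched above.
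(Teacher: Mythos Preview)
Your argument is correct and takes a genuinely different route from the paper. The paper embeds $P(\B{\theta})$ into a $(2d-1)\times(2d-1)$ matrix $S$ by adjoining one extra row and column, so that $\det S=\theta_{d0}\det P(\B{\theta})$ by cofactor expansion along the first column; it then performs explicit row operations on $S$ (adding row $i$ to row $i+d$ for $1\le i\le d-1$, then a further row combination) to transform it into a row-permuted copy of the Sylvester matrix of $R(p,p')$ with an overall factor $d^{d-2}$, and reads off the identity. Your approach skips the auxiliary matrix entirely: you observe directly that $P(\B{\theta})$ \emph{is} the Sylvester matrix of $p'$ and the polynomial $g(x)=d\,p(x)-x\,p'(x)$, so $\det P=R(p',g)$, and then evaluate $R(p',g)$ via the product-over-roots formula using $g(\gamma_\ell)=d\,p(\gamma_\ell)$ at the critical points of $p$. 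What you gain is conceptual clarity and brevity: the identity $g=dp-xp'$ explains \emph{why} the discriminant shows up, rather than verifying it by a matrix manipulation. What the paper's approach buys is self-containment: it uses only elementary row operations and the determinantal definition of $R(p,p')$, never invoking the product formula for resultants, so a reader unfamiliar with that formula can still follow it. Your closing caveat about checking the block layout and shift pattern is well placed; with the convention fixed in the paper (the $f$-block on top, $g$-block below, each with the number of rows equal to the degree of the other polynomial), the match with $P(\B{\theta})$ is exact and no extraneous sign or scalar appears.
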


\begin{proof}
Define a $(2d-1)\times (2d-1)$ matrix $S$ as
\[
S=
\begin{bmatrix}
\theta_{d0} & 0 & -\theta_{d-2,2} & -2\theta_{d-3,3} & \cdots & -(d-1)\theta_{0d} & 0 & \cdots & 0\\
{\bm 0} &  &  &  & P(\B{\theta}) %
\end{bmatrix},
\]
where ${\bm 0}$ is a column vector of zeros of size $2d-2$.
Expanding the determinant with respect to the fist column we have
\begin{equation}
\label{eq:s-p-relation}
\det S = \theta_{d0} \det P(\B{\theta}).
\end{equation}
On the other hand we add the $i$-row to the $(i+d)$-th rows $(1 \le i\le d-1)$ and  then
add the $(d+1)$-st row multiplied by $d-1$ to the first row.  Then we obtain
\[
\det S
=d^{d-2}\det
\begin{bmatrix}
d\theta_{d0} & \cdots & \cdots & 2\theta_{2,d-2} & \theta_{1,d-1}\\
&d\theta_{d0} & \cdots & \cdots & 2\theta_{2,d-2} & \theta_{1,d-1}\\
&&& \cdots & \cdots \\
&&&& \cdots & \cdots \\
&&&& d\theta_{d0} & \cdots & \cdots & 2\theta_{2,d-2} & \theta_{1,d-1}\\
\theta_{d0} & \theta_{d-1,1} & \cdots & \cdots & \cdots & \theta_{0d}\\
& \theta_{d0} & \theta_{d-1,1} & \cdots & \cdots & \cdots & \theta_{0d}\\
&&&& \cdots & \cdots &\cdots \\
&&& \theta_{d0} & \theta_{d-1,1} & \cdots & \cdots & \cdots & \theta_{0d}
\end{bmatrix}.
\]
By interchanging rows 
\begin{align}
\label{eq:s-d-relation}
\det S
&= d^{d-2} (-1)^{d(d-1)} R(p,p') \nonumber \\
&= \theta_{d0} d^{d-2} \det D(\B{\theta}). \qquad(\text{by}\  \eqref{eq:det-matrix})
\end{align}
From \eqref{eq:s-p-relation}, \eqref{eq:s-d-relation} we have
\[
\det P(\B{\theta}) = d^{d-2}  D(\B{\theta}).
\]
\end{proof}

{\red
One of the reviewers asked the question of invariance of the singularities under transformations
of parameters. 
The class of holonomic functions are closed under rational transformations of arguments.
Hence if the parameters $\theta_{ij}$ are transformed by a rational transformation, the singularity
of the Pfaffian system remains to be the singularity, although the transformation itself may add its own (removable) singularity.
}

\subsection{Structure of the parameter space for the bivariate case}
\label{subsec:bivariate-parameter-space}
In this section we investigate the parameter space $\B{\Theta}$.
By the transformation
\[
x= r\cos \omega,  \quad 
y= r\sin \omega,
\]
define
\begin{equation*}
H(\Btheta,\omega) = \int_{0}^{\infty} \tilde{h}(\B{\theta},r,\omega) \R{d}r, \quad 
\tilde{h}(\B{\theta},r,\omega) = r h(\B{\theta}, r \cos \omega, r \sin \omega).
\end{equation*}
Since $\tilde h$ is non-negative, by Fubini's theorem 
$A(\B{\theta})$ is written as
\[
A(\B{\theta}) = \int_{0}^{\pi/2} H(\Btheta,\omega)\R{d}\omega.
\]
{\red 
Note that  $\log h(\Btheta,r\cos\omega, r\sin\omega)$
is a polynomial in $r$ for each $\omega\in[0,\pi/2]$ . Since 
the limit of $\log h(\Btheta,r\cos\omega, r\sin\omega)$ as $r\rightarrow\infty$
is $+\infty$ of $-\infty$, depending on the sign of the leading coefficient (i.e.\ the highest non-zero coefficient for a power) of $r$, we have
\[
 H(\Btheta,\omega)<\infty  \ \ \Leftrightarrow \ \ 
 h(\B{\theta},r\cos\omega ,r\sin\omega) \rightarrow 0 \quad (r \rightarrow \infty). %
\]
Write  $a=1/\tan\omega$.
The coefficient of the highest degree term in $y$ of $\log h(\Btheta,ay,y)$ 
is $p(a;\B{\theta})$, where $p(x;\B{\theta})$ is given in  \eqref{eq:eq-d}.
If $p(a;\B{\theta}) < 0$ for all $a\ge 0$, then $\theta_{0d}$ and the leading
coefficient of $p(a;\B{\theta})$ are negative. 
By interchanging the roles of $x$ and $y$ we also assume that the leading coefficient of
$p(a;\B{\theta})$ is  $\theta_{d0}$. 
Note that $p(a;\B{\theta}) < 0$ for all $a\ge 0$ implies $\sup_{a\ge 0}p(a;\B{\theta}) < 0$. 
Define 
\begin{equation}
\label{eq:bivariate-model-of-order-d}
\B{\Theta}'
= \{ \B{\theta} \mid  p(a;\B{\theta}) < 0,\forall a \ge  0, \ \theta_{0d}<0, \theta_{d0}<0 \}. %
\end{equation}
Then  we have   $\B{\Theta}' \subset \B{\Theta}$.
}
Note that for $\Btheta\in \B{\Theta}\setminus \B{\Theta}'$ there
exists $a> 0$ such that $p(a;\Btheta)=0$, i.e., the term of order $d$ in $y$ vanishes on the ray
$\{(ay, y), y\ge 0 \}$.
In this sense $\Btheta\in \B{\Theta}\setminus \B{\Theta}'$ may be considered as a model of order $d-1$.
We call $\B{\Theta}'$ in \eqref{eq:bivariate-model-of-order-d}
the parameter space of a {\em proper} order-$d$ model.

{\red
\begin{remark}
\label{rem:3}
One of the reviewers gave very interesting examples concerning the continuity of $H(\Btheta,\omega)$
and the relation between the convergence (finiteness) of $A(\Btheta)$ and the convergence of 
$H(\Btheta,\omega)$ for each $\omega\in [0,\pi/2]$.
\begin{itemize}
\item Let $h(\Btheta,x,y)=\exp(x^2-y^2-x)$. Then $H(\Btheta,\omega)=\infty$ for $\omega < \pi/4$, 
but $H(\Btheta, \pi/4)<\infty$.
\item Let $h(\Btheta,x,y)=\exp(-(y-x^2)^2)$. Then  $H(\Btheta,\omega)<\infty$ for each $\omega\in [0,\pi/2]$ but $A(\Btheta)=\infty$.
\item Let $h(\Btheta,x,y)=\exp(-(x-y)^2(x^2+y^2)^2)$. Then $H(\Btheta,\pi/4)=\infty$ but
$A(\Btheta)<\infty.$
\end{itemize}
These examples illustrate the difficulty in characterizing the boundary of $\B{\Theta}$.
\end{remark}
}

The above consideration gives insight into the structure of $\B{\Theta}$,
but it is still difficult to decide whether $\B{\theta}\in\B{\Theta}'$
for a given $\B{\theta}$.
We propose the following easier method for determination.
Now clearly we have
{\red
\[
p(x;\B{\theta}) < 0 \quad(\forall x \ge 0) \ 
\Leftrightarrow \ 
p(x;\B{\theta}) \text{ does not have a positive root.}
\]
}
Following the argument in \cite{N.Fukuma}, we now move
$\B{\theta}$ from an initial point in $\B{\Theta}'$, keeping
$\theta_{d0}<0,\theta_{0d}<0$, 
and consider 
when $p(x;\B{\theta})$ is no longer negative for some $x > 0$, i.e., when $p(x;\B{\theta})=0$ has a positive root. There are two cases.
\begin{enumerate}
\setlength{\itemsep}{0pt}
\item A real root moves from the negative real line to the positive real line.
\item A complex root moves to the positive real line.
\end{enumerate}
The {\red first} case corresponds to $\theta_{0d}> 0$, but this does not happen by our assumption.
Complex roots for a polynomial with real coefficients appear in conjugate pairs and
in the second case we have a multiple root on the positive real line.
Hence under the assumption  $\theta_{d0}<0,\theta_{0d}<0$, a positive root appears if and only if
the discriminant $D(\B{\theta})$ of $p(x;\B{\theta})=0$ becomes $0$ %
and the root becomes positive.  Note that $D(\Btheta)=0$ may also happen because of negative or complex multiple roots.

Based on this observation consider the complement of the hypersurface $\{\Btheta \mid D(\Btheta)=0\}$
in $\{ \B{\theta} \mid \theta_{d0} < 0, \theta_{0d} < 0 \}$:
\[
\B{\Theta}''=\{ \B{\theta} \mid \theta_{d0} < 0, \theta_{0d} < 0 \} \setminus \{\Btheta \mid D(\B{\theta})=0\}
\]
$\B{\Theta}''$ consists of disjoint open connected components (``chambers''), which we denote
by $\B{\Theta}''_i, i\in I$. Then $\B{\Theta}''$ is partitioned as
\[
\B{\Theta}'' = \bigcup_{i\in I} \B{\Theta}''_i.
\]
Note that the number of positive roots of $p(x;\Btheta)$ is constant in each chamber $\B{\Theta}_i''$.
Hence if $\B{\Theta}''_i\cap \B{\Theta}' \neq \emptyset$, then $\B{\Theta}''_i\subset \B{\Theta}'$, namely
each $\B{\Theta}''_i$ is either a subset of $\B{\Theta}'$ or disjoint from $\B{\Theta}'$.  Define
\[
I^* = \{ i\in I \mid \B{\Theta}''_i\cap \B{\Theta}' \neq \emptyset\}
=\{ i\in I \mid \B{\Theta}''_i \subset \B{\Theta}'\}.
\]
Since the hypersurface $\{\Btheta \mid D(\Btheta)=0\}$ has measure zero,  
we have the following theorem
concerning $\B{\Theta}'$ in \eqref{eq:bivariate-model-of-order-d}.

\begin{theorem}
\label{thm:p-space-thm}
Except for a set of measure zero
\begin{equation}
\label{eq:thm6.3}
\B{\Theta}' = \bigcup_{i\in I^*} \B{\Theta}''_i.
\end{equation}
\end{theorem}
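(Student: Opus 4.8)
The plan is to show the two inclusions in \eqref{eq:thm6.3} up to a measure-zero set, using the fact that the number of positive roots of $p(x;\B{\theta})$ is locally constant on $\B{\Theta}''$. First I would recall that $\B{\Theta}'$ in \eqref{eq:bivariate-model-of-order-d} is, by the discussion preceding the theorem, exactly the set of $\B{\theta}$ with $\theta_{d0}<0$, $\theta_{0d}<0$ for which $p(x;\B{\theta})<0$ for all $x\ge 0$; equivalently (since $\theta_{0d}=p(0;\B{\theta})<0$ rules out a root at $0$) the set where $p(x;\B{\theta})$ has \emph{no} positive root. So $\B{\Theta}'\subset\{\theta_{d0}<0,\theta_{0d}<0\}$, and I would split $\B{\Theta}'$ according to whether or not $D(\B{\theta})=0$: writing $\B{\Theta}'=\big(\B{\Theta}'\setminus\{D(\B{\theta})=0\}\big)\cup\big(\B{\Theta}'\cap\{D(\B{\theta})=0\}\big)$. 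The second piece is contained in the hypersurface $\{\B{\theta}\mid D(\B{\theta})=0\}$, which has Lebesgue measure zero (it is the zero set of a nonzero polynomial), so it may be discarded.

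Next I would treat the first piece $\B{\Theta}'\setminus\{D(\B{\theta})=0\}\subset\B{\Theta}''$. Any point $\B{\theta}$ of this set lies in some chamber $\B{\Theta}''_i$, and since $\B{\theta}\in\B{\Theta}'$ as well, we have $\B{\Theta}''_i\cap\B{\Theta}'\neq\emptyset$, i.e.\ $i\in I^*$. Hence $\B{\Theta}'\setminus\{D=0\}\subset\bigcup_{i\in I^*}\B{\Theta}''_i$, which gives one inclusion modulo measure zero. For the reverse inclusion, I would fix $i\in I^*$ and use the key observation stated just before the theorem: within a single chamber $\B{\Theta}''_i$, where $D(\B{\theta})\neq 0$ throughout so that $p(x;\B{\theta})$ has only simple roots depending continuously on $\B{\theta}$, no root can cross the real axis or the point $x=0$ (the latter because $\theta_{0d}=p(0;\B{\theta})\ne 0$ on $\{\theta_{0d}<0\}$), so the number of positive roots of $p(x;\B{\theta})$ is constant on the connected set $\B{\Theta}''_i$. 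Since $i\in I^*$ means $\B{\Theta}''_i$ contains a point of $\B{\Theta}'$, at which the number of positive roots is $0$, that number is $0$ on all of $\B{\Theta}''_i$; together with $\theta_{d0}<0,\theta_{0d}<0$ (which hold throughout $\B{\Theta}''$) this means $p(x;\B{\theta})<0$ for all $x\ge 0$ there, i.e.\ $\B{\Theta}''_i\subset\B{\Theta}'$. Taking the union over $i\in I^*$ yields $\bigcup_{i\in I^*}\B{\Theta}''_i\subset\B{\Theta}'$, and combining with the previous paragraph gives \eqref{eq:thm6.3} up to the measure-zero set $\{D(\B{\theta})=0\}$.

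The step I expect to be the main obstacle is making rigorous the claim that the number of positive roots is constant on each chamber. This rests on continuity of the roots of $p(x;\B{\theta})$ as functions of the coefficients, which is standard, but one must be careful that on $\B{\Theta}''_i$ no root is lost to infinity (the leading coefficient $\theta_{d0}$ stays bounded away from $0$ there, so the degree does not drop) and that simplicity of the roots ($D\neq 0$) prevents a pair of complex conjugate roots from merging and splitting onto the positive axis without $D$ vanishing. Once these two points are dispatched, a root of $p(\cdot;\B{\theta})$ can enter the open positive half-line only by passing through $0$ or through a non-real value touching down at a real point, and both are excluded on $\B{\Theta}''_i$; so the positive-root count is locally constant, hence constant on the connected chamber. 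The rest of the argument is bookkeeping about the decomposition into chambers, already set up in the text preceding the statement.
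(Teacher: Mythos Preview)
Your proposal is correct and follows essentially the same approach as the paper: the paper's argument is entirely contained in the paragraph preceding the theorem, where it notes that the number of positive roots of $p(x;\B{\theta})$ is constant on each chamber $\B{\Theta}''_i$, so each chamber is either contained in or disjoint from $\B{\Theta}'$, and then invokes that $\{D(\B{\theta})=0\}$ has measure zero. Your write-up is in fact more detailed than the paper's, since you spell out the two inclusions separately and justify the constancy of the positive-root count via continuity of simple roots and the observation that $\theta_{d0}\neq 0$ prevents degree drop while $\theta_{0d}\neq 0$ prevents a root at $x=0$.
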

Although it is difficult to completely characterize the boundaries of $\B{\Theta}''_i$'s for general $d$, 
if
the boundary between ${\red \B{\Theta}_i''}, i\in I^*,$ and ${\red \B{\Theta}_j''}, j\in I^*,$ corresponds to
negative or complex multiple roots, then
the boundary also belongs to $\B{\Theta}'$.

We illustrate the partition \eqref{eq:thm6.3} for the case of $d=3$. For any $c_1, c_2>0$, we have $p(x;\Btheta)<0, \forall x>0$ if and only if
$c_1 p(c_2 x;\Btheta)<0, \forall x>0$. This implies that we can assume $\theta_{03}=\theta_{30}=-1$
without loss of generality in  considering the partition \eqref{eq:thm6.3}.
In this case the discriminant is written as
{\red
\[
D(\Btheta)=  \theta_{12}^2 \theta_{21}^2 +4\theta_{12}^3  + 4 \theta_{21}^3 + 18\theta_{12}
\theta_{21}-27.
\]
}
On the $(\theta_{12},\theta_{21})$-plane, $D(\Btheta)=0$ consists of two curves as illustrated
in Figure \ref{fig:d3-disc}. In Figure \ref{fig:d3-disc}, chamber $A$ corresponds to
two positive roots and one negative root,  chamber $B$ corresponds to two complex roots
and one negative root, and chamber $C$ corresponds to three negative roots.
Hence the partition in \eqref{eq:thm6.3} is $B\cup C$. The boundary between $B$ and $C$
also belongs to $\B{\Theta}'$.

\begin{figure}[htbp]
\begin{center}
\includegraphics[width=7cm]{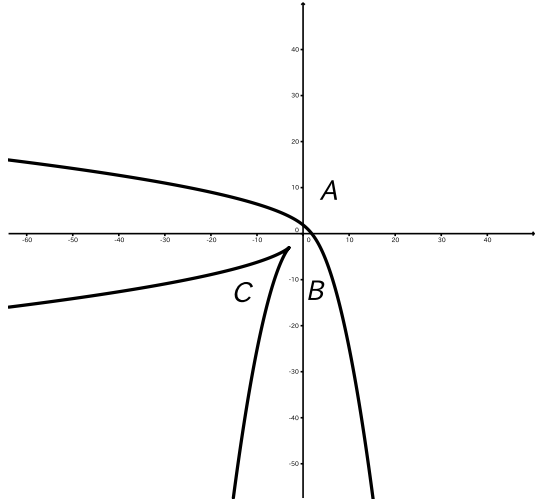}
\caption{Partition of Theorem \ref{thm:p-space-thm}
for $d=3$}
\label{fig:d3-disc}
\end{center}
\end{figure}

For maximum likelihood estimation we need to take an initial point in each chamber 
$\B{\Theta}''_i$, $i\in I^*$, of  
Theorem \ref{thm:p-space-thm}
and perform the numerical integration only for those initial points.
{\red
Note that any two points in the same chamber can be connected by a path on which $\det P(\B{\theta}) \neq 0$
and the integration of \eqref{eq:sol} does not depend on the choice of a path.
}
It is difficult to give a simple initial point $\B{\Theta}_i''$ for all 
$i \in I^*$.  For some $i \in I^*$, the following simple initial point $\B{\theta}_0$
is available. For $c_1 >0, c_2 >0$
define $\B{\theta}_0$ by 
\begin{equation*}
\theta_{d0} = -c_1, \  \theta_{0d} = -c_2, \quad  
\theta_{ij}= 0 \ \ \text{for $(i,j)\neq (0,d), (d,0)$}.
\end{equation*}
Then
\[
p(x;\B{\theta}_0)
= -c_1 x^d -c_2=0, \quad 
p'(x;\B{\theta}_0) 
= -d c_1 x^{d-1}=0
\]
do not have a common root and $R(p,p') \neq 0$. Hence %
$D(\B{\theta}_0) \neq 0$ and $\det P(\B{\theta}) \neq 0$ by
\eqref{eq:det-relation}.
Furthermore clearly $p(x;\B{\theta}_0)$ is negative for $x > 0$.
Hence $\B{\theta}_0 \in \B{\Theta}_i''$,  $i \in I^*$. 
For this $\B{\theta}_0$ the normalizing constant and its derivatives are easily evaluated as
\begin{align*}
\partial_{ij}A(\B{\theta}_0)
&= \int_0^\infty \int_0^\infty x^i y^j \exp(-c_1 x^d -c_2 y^d )\R{d}x\R{d}y \\
&= \int_0^\infty x^i \exp(-c_1 x^d) \R{d}x  \int_0^\infty y^j \exp(-c_2 y^d) \R{d}y\\
&= \frac{1}{d} c_{1}^{-(i+1)/d}\Gamma \left( \frac{i+1}{d} \right) \frac{1}{d}c_2^{-(j+1)/d}\Gamma \left( \frac{j+1}{d} \right).
\end{align*}

Although we do not show numerical results for the bivariate case, for $d=2$ the computation of the
normalizing constant and MLE is fast and the asymptotic distribution of MLE has been checked.
For $d=3$, the computation of the normalizing constant is fast, but the computation of MLE is
somewhat heavy at current implementation in MATLAB.  This seems to be due to high dimensionality 
(9 parameters) of the model for $d=3$.

\section{Some discussions}
\label{sec:discussion}

In this paper we discussed the maximum likelihood estimation of the exponential-polynomial distribution.
Here we discuss some possible extensions of the distribution and topics for further research.

In the exponential-polynomial distribution we have a polynomial as the exponent of the
exponential function.  We can add another polynomial to the exponential-polynomial distribution,
if this polynomial is non-negative over the sample space.  
Recall that the problem concerning
non-negative polynomials was also essential for understanding the structure of the parameter space
for  the bivariate exponential-polynomial distribution in Section \ref{subsec:bivariate-parameter-space}.
Let \[
p(x;\Beta)=\eta_0 + \eta_1 x + \dots + \eta_h x^h
\]
be a polynomial in $x$. Consider the following density on the positive real line:
\begin{align*}
f(x;  \Beta, \Btheta) &= \frac{1}{\tilde A(\Beta,\Btheta)} p(x;\Beta)
\exp(\theta_1 x + \dots + \theta_d x^d), \\
\tilde A(\Beta,\Btheta)&= \int_0^\infty p(x;\Beta)
\exp(\theta_1 x + \dots + \theta_d x^d) \R{d}x.
\end{align*}
The normalizing constant $\tilde A(\Beta,\Btheta)$ can be evaluated as
\[
\tilde A(\Beta,\Btheta)=\sum_{i=0}^h \eta_i \int_0^\infty x^i \exp(\theta_1 x + \dots + \theta_d x^d)\\
=\sum_{i=0}^h \eta_i \partial_1^i A_d(\Btheta),
\]
where $A_d(\Btheta)$ is given in \eqref{eq:norm-constant}.  Hence from the view point of holonomic gradient
descent this generalization can be easily handled.  However, in the estimation of this density
we need to guarantee that $p(x;\hat\Beta)$ is a non-negative polynomial for $x\ge 0$.
This problem was considered in Fushiki et al.\ (\cite{T.Fushiki}).  
They  showed that the maximum likelihood estimation under the restriction of non-negativity of $p(x;\hat\Beta)$
can be performed with the technique of semidefinite programming.
We can also use the parameterization of non-negative polynomials given in Proposition 3.3 of \cite{N.Kato}.  
See also Section 9, Chapter V of \cite{S.Karlin}.

For the univariate case we derived score tests for determining the order $d$ of the
model.  The difficulty in model selection is  the fact 
that the model of order $d-1$ is on the boundary of the model of order $d$.
In this paper we did not discuss the problem of model selection for the bivariate case, 
because the boundary  is much more difficult compared to the univariate case, as discussed in Section 
\ref{subsec:bivariate-parameter-space}.  Also in the
bivariate case, as the model of order $d$ we included all monomials $x^d, x^{d-1}y, \dots, y^d$ of order $d$.  However we may omit some monomials among these $d+1$ monomials.  
The structure of the boundary of the model seems to depend 
on the choice of monomials of order $d$.
Model selection procedures for the
bivariate case is left to a future study.

{\red
It is of interest to generalize our results for bivariate case to higher dimensions.
As remarked in Remarks \ref{rem:1} and \ref{rem:2} we can use general theory of
$A$-hypergeometric systems to obtain results for the exponential-polynomial distribution in general dimension.
In the bivariate case the singularity of the Pfaffian system is described
in terms of the discriminant $D(\B{\theta})$ in Theorem 6.3.
It is of interest to generalize this result to higher dimensions.
}

\bigskip
\noindent
{\bf Acknowledgment} \ \ 
The authors are very grateful to Satoshi Kuriki, Nobuki Takayama, Tamio Koyama and two reviewers  for very useful suggestions.

\bibliographystyle{abbrv}
\bibliography{exp-pol}

\end{document}